\documentclass[a4paper,twoside,11pt,reqno]{amsart}   
\usepackage{amssymb,amsfonts,amsmath,stmaryrd,bbm}
\usepackage{latexsym}
\usepackage{gensymb}
\usepackage{verbatim} 
\usepackage{colortbl}
\usepackage{soul}
\usepackage{arydshln}  
\usepackage[utf8]{inputenc}

\usepackage{kpfonts}
\usepackage{afterpage}
\usepackage{dsfont}
\usepackage{color} 
\usepackage{a4wide} 
\usepackage{url}
\usepackage{pifont, tikz, subfigure}
\usetikzlibrary{plotmarks,shapes,arrows,positioning}

\usepackage[plainpages=false,pdfpagelabels,colorlinks=true,citecolor=blue,hypertexnames=false]{hyperref}

\newcommand{\LandauO}{\mathcal{O}}

\newcommand{\sC}{\s{C}}

\newcommand{\sF}{\s{F}}

\newcommand{\sH}{\s{H}}
\newcommand{\sI}{\s{I}}
\newcommand{\sbI}{\s{\overline I}}
\newcommand{\sbJ}{\s{\overline J}}
\newcommand{\sbK}{\s{\overline K}}
\newcommand{\sbZ}{\s{\overline Z}}
\newcommand{\sbU}{\s{\overline U}}
\newcommand{\sJ}{\s{J}}

\newcommand{\sU}{\s{U}}
\newcommand{\sZ}{\s{Z}}
\newcommand{\hI}{\hat \sI}
\newcommand{\hJ}{\hat \sJ}

\newcommand{\cov}{\triangleleft}
\newcommand{\vide}{\varnothing}
\newcommand{\e}[1]{\mathcal{#1}}
\newcommand{\s}[1]{\mathbf{#1}}
\newcommand{\eD}{\mathcal D}
\newcommand{\ebI}{\overline{\e I}}
\newcommand{\ebJ}{\overline{\e J}}
\newcommand{\ebK}{\overline{\e K}}

\newcommand{\nablam}{\nabla_m}


\catcode`\@=11
\def\section{\@startsection{section}{1}%
 \z@{.7\linespacing\@plus\linespacing}{.5\linespacing}%
 {\normalfont\Large\bfseries\scshape\centering}}

\def\subsection{\@startsection{subsection}{2}%
  \z@{.5\linespacing\@plus\linespacing}{.5\linespacing}%
  {\normalfont\large\bfseries\scshape}}

\def\subsubsection{\@startsection{subsubsection}{3}%
 \z@{.5\linespacing\@plus\linespacing}{-.5em}
 {\normalfont\large\bfseries}}
\catcode`\@=12


\newtheorem{theorem}{Theorem}[section]
\newtheorem{lemma}[theorem]{Lemma}
\newtheorem{conjecture}[theorem]{Conjecture}

\newtheorem{prop}[theorem]{Proposition}

\newtheorem{definition}[theorem]{Definition}
\theoremstyle{definition}

\newtheorem{example}[theorem]{Example}

\newcommand{\beq}{\begin{equation}}
\newcommand{\eeq}{\end{equation}}

\renewcommand{\mp}{m^+}

\newcommand{\ns}{\mathbb{N}}


\newcommand{\qs}{\mathbb{Q}}




\newcommand{\Sn}{\mathfrak S}

\newcommand{\fps}{formal power series}
\newcommand{\gf}{generating function}
\newcommand{\gfs}{generating functions}
\def\emm#1,{{\em #1}}

%

\topmargin 8mm

\begin{document}
\date{\today}
%
\title
{Intervals in the greedy Tamari posets}

\author[M. Bousquet-Mélou]{Mireille Bousquet-Mélou}

\thanks{MBM was partially supported by the ANR projects DeRerumNatura (ANR-19-CE40-0018) and Combiné (ANR-19-CE48-0011). FC was partially supported by the ANR projects Charms (ANR-19-CE40-0017) and Combiné (ANR-19-CE48-0011).}

\address{MBM: CNRS, LaBRI, Université de Bordeaux, 351 cours de la
  Libération,  F-33405 Talence Cedex, France\\
FC: Institut de Recherche Mathématique Avancée, UMR 7501 Université de Strasbourg and CNRS, 7 rue René-Descartes, 67000 Strasbourg, France} 
\email{bousquet@labri.fr, chapoton@unistra.fr}

\author[F. Chapoton]{Frédéric Chapoton}

\keywords{Tamari posets --- Planar maps --- Enumeration --- Algebraic generating functions}

\makeatletter
\@namedef{subjclassname@2020}{%
  \textup{2020} Mathematics Subject Classification}
\makeatother

\subjclass[2020]{Primary 05A15  -- Secondary 06A07, 06A11}

\begin{abstract}
  We consider a greedy version of the $m$-Tamari order 
  defined on  $m$-Dyck paths, recently introduced by Dermenjian. Inspired by
  intriguing connections between intervals in the ordinary {1-}Tamari
  order and planar triangulations, and more generally by the existence of simple formulas counting intervals in the ordinary $m$-Tamari orders, we investigate the number of
  intervals in the greedy order on $m$-Dyck paths of fixed size. We find again a simple formula, which also counts
certain planar
  maps (of prescribed size) called $(m+1)$-constellations.

  For instance, when $m=1$ the number of intervals in the greedy
  order on {1-}Dyck paths of length $2n$ is proved to be
  $\frac{3\cdot 2^{n-1}}{(n+1)(n+2)} \binom{2n}{n}$, which is also the
  number of bipartite maps with $n$ edges.

  Our approach is recursive, and uses a ``catalytic'' parameter, namely the length of the final descent of the upper path of the interval. The resulting bivariate \gf\ is algebraic for all $m$. We show that the same approach can be used to count intervals in the ordinary $m$-Tamari lattices as well. We thus recover 
  the earlier result of the first author, Fusy and Préville-Ratelle, who were using a different catalytic parameter.
\end{abstract} 

\maketitle

\section{Introduction}
In the past 15 years, several intriguing connections have emerged between  intervals in various posets defined on lattice paths, and families of  planar maps~\cite{BeBo07,ch06,mbm-fusy-preville,mbm-chapuy-preville,duchi-henriet,fang-bipartite,fang-bridgeless,fang-preville-enumeration,fang-trinity,chapoton-dexter}. This paper adds a picture to this gallery by establishing an enumerative link between \emm intervals in  greedy $m$-Tamari posets, and \emm planar constellations,.

Let us fix an integer $m \geq 1$. An $m$-Dyck path (or Dyck path, for short) of size $n$ is a sequence of $n$ up steps $(+m,+m)$ and $m n$ down steps $(+1,-1)$
starting from $(0,0)$ in $\ns^2$ and never going strictly below the
horizontal axis. The set of such
paths is
denoted by $\eD_{m,n}$.  We {encode} 
Dyck paths by words, using the letter $1$ for up steps and $0$ for down steps.  A \emm factor, of a path/word $w$ is a sequence of consecutive steps/letters. A
{\it valley} of $w$ is an occurrence of a factor $01$.

\begin{figure}[tbh]
  \centering
     	\scalebox{0.85}{\input{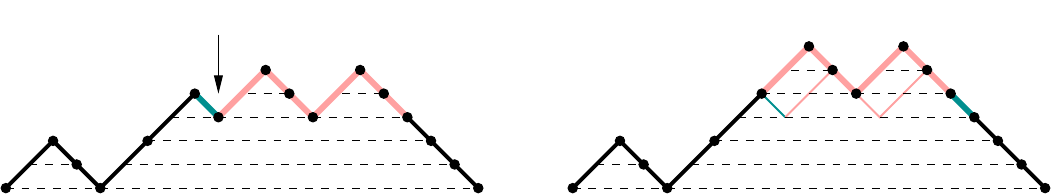_t}}
  \caption{Example of greedy cover relation, with $n=5$ and {$m=2$}.}
  \label{fig:greedy} 
\end{figure}

The  greedy Tamari partial order on the set $\eD_{m,n}$ was introduced in~\cite{aram2022}.  It is defined by its cover
relations, as follows. For every Dyck path $w$, there is a cover
relation $w \cov w'$ for every valley of $w$. The path
$w'$ is obtained by swapping
the down step of the valley and
the longest Dyck factor that follows it.
See Figure~\ref{fig:greedy} for an example.
Recall that in the \emm ordinary, Tamari order, cover relations are obtained by swapping the down step of the valley and
the \emm shortest, Dyck factor that follows it~\cite{bergeron-preville,mbm-fusy-preville}. Hence  cover relations in the greedy order correspond to certain sequences of cover relations in the ordinary order. In particular, any greedy interval $[v,w]$ is also an interval in the ordinary Tamari poset.  We refer to Figures~\ref{fig:poset-n=3} and~\ref{fig:poset} for comparison of the two posets, in the case where $m=1$ and $n=3$ or $4$.

Our main result gives the number of greedy intervals in $\eD_{m,n}$.
 
\begin{figure}[bht] 
  \centering
  \includegraphics[height=35mm]{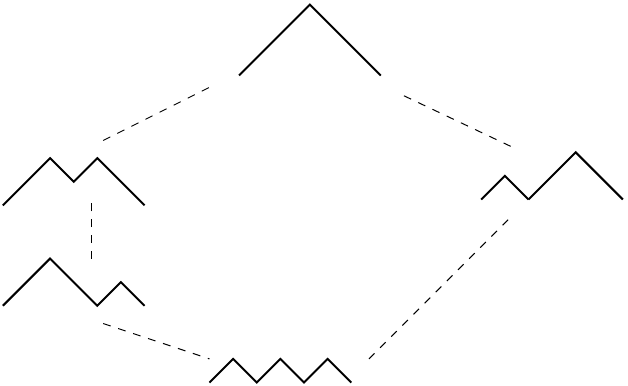}\hskip 10mm
   \includegraphics[height=35mm]{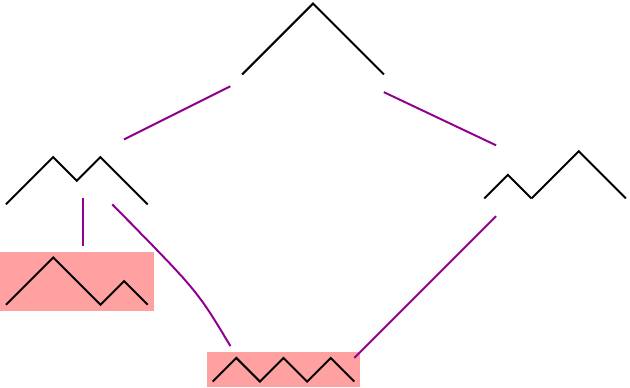}
  \caption{The ordinary Tamari lattice on $\eD_{1,n}$ with $n=3$ (left), and its greedy counterpart (right). The shaded elements are minimal for the greedy order. The cover relations in the greedy order realize shortcuts in the ordinary Tamari lattice.}
  \label{fig:poset-n=3}
\end{figure}
 
\begin{figure}[bht] 
  \centering
  \includegraphics[height=60mm]{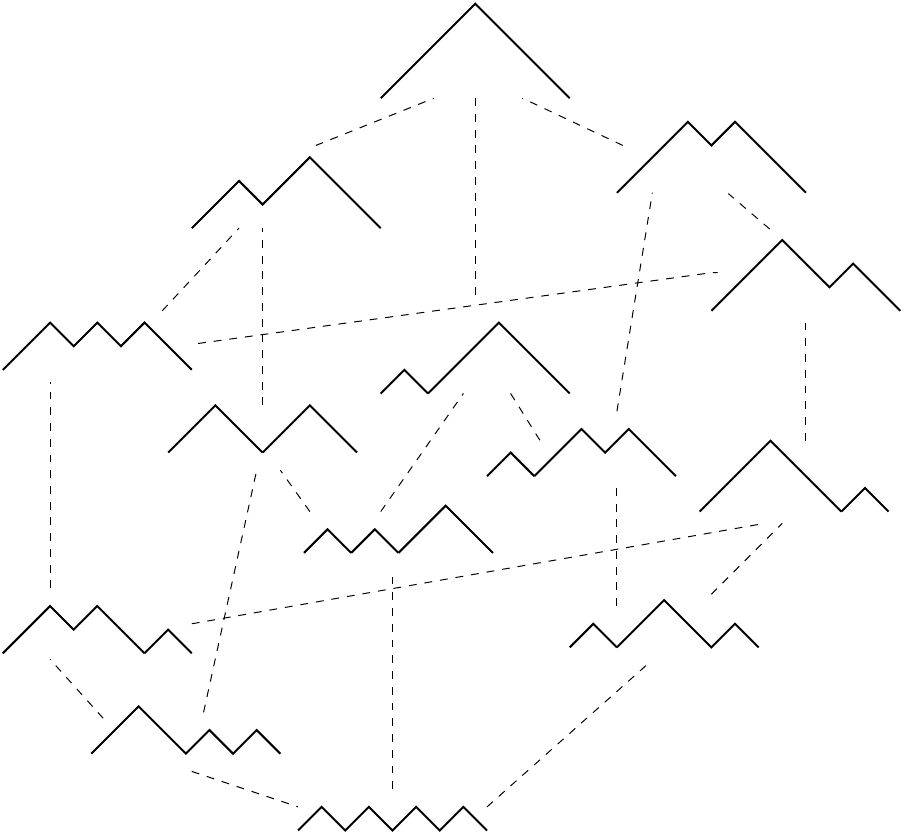}\hskip 10mm
   \includegraphics[height=60mm]{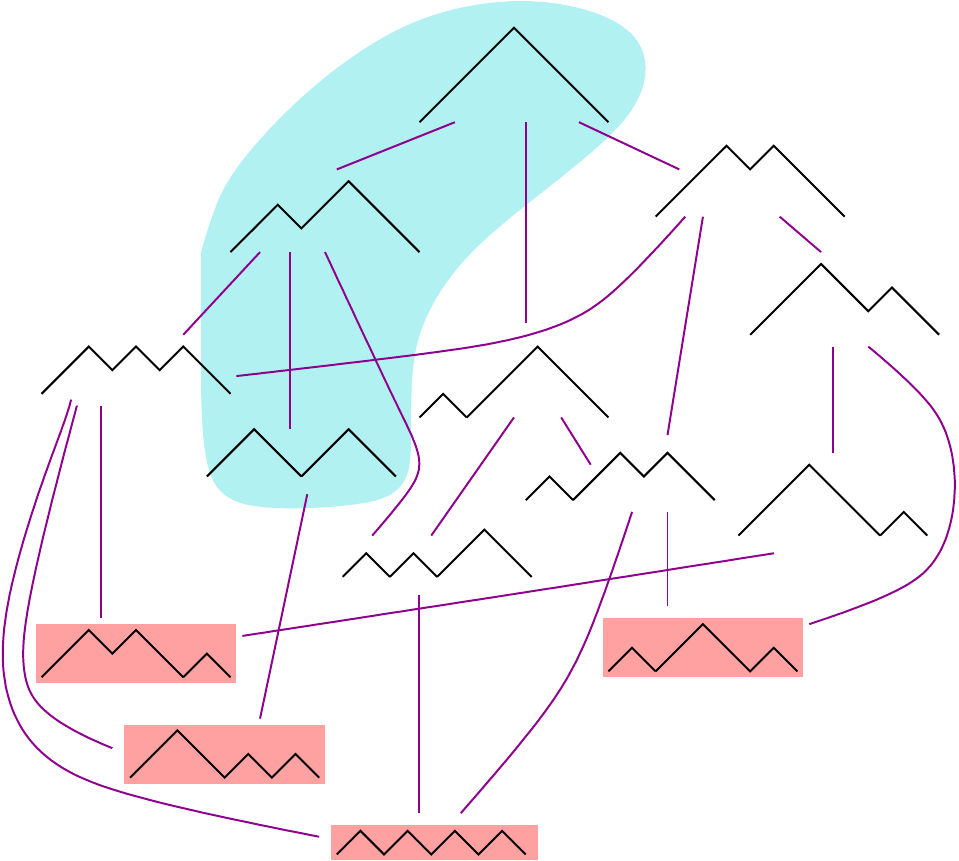}
   \caption{The ordinary Tamari lattice on $\eD_{1,n}$ with $n=4$ (left), and its greedy counterpart (right).
      The chain above $11001100$ is order-isomorphic to the greedy order on $\eD_{2,2}$.}
  \label{fig:poset}
\end{figure}

\begin{theorem}\label{thm:numbers}
  The number of intervals in the greedy $m$-Tamari poset $\eD_{m,n}$ is
  \[
    \frac{(m+2) (m+1)^{n-1}}{(mn+1)(mn+2)}\binom{(m+1)n}{n}.
  \]
  This is also the number of \emm $(m+1)$-constellations, having $n$ polygons.
\end{theorem}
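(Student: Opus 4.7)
The plan is to set up a functional equation for a bivariate generating function $F(t; x)$ counting greedy $m$-Tamari intervals, where $t$ marks the size $n$ and $x$ is the catalytic parameter announced in the abstract, namely the length of the final descent of the upper path $w$ of the interval. Once $F(t;x)$ is determined, specializing $x=1$ (and possibly also $x=0$ as a normalization) will give the number of intervals, which should match the known enumeration of $(m+1)$-constellations by number of polygons. I would treat the announced equality with the number of $(m+1)$-constellations as a closed-form verification, citing the classical formula of Bousquet-Mélou--Schaeffer; a direct bijective argument seems far less promising given the complexity of the greedy cover relations.

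First I would give a precise recursive decomposition of a greedy interval $[v,w]$. The key structural fact to understand is how the ``longest Dyck factor after a valley'' (which drives the greedy cover relations) interacts with the decomposition of $w$ at its final descent or at its last primitive factor. I would aim to show that an interval of a given final-descent length decomposes into (at most a constant number of) smaller intervals whose final-descent lengths are constrained, in a way that can be tracked by a single catalytic variable. This is the conceptually delicate step, and I would draw inspiration from the analogous decomposition of ordinary $m$-Tamari intervals revisited later in the paper.

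Second, I would translate the decomposition into a functional equation for $F(t;x)$. I expect an equation of the form $P(F(t;x), F(t;1), t, x) = 0$, mixing $F(t;x)$ with $F(t;1)$ — the classical signature of an equation with one catalytic variable whose solution is algebraic. I would then apply the kernel method: find an algebraic series $X(t)$ cancelling the ``kernel'' (the coefficient of $F(t;x)$) and substitute $x \leftarrow X(t)$ to obtain a polynomial equation for $F(t;1)$. Coefficient extraction would proceed via Lagrange inversion on an equation of the expected form $y = t\,\phi(y)$, since the predicted count $\frac{(m+2)(m+1)^{n-1}}{(mn+1)(mn+2)}\binom{(m+1)n}{n}$ has exactly the shape produced by Lagrange's formula with $\phi(y) = (1+y)^{m+1}$ (or a close variant). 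Comparison with the formula for $(m+1)$-constellations then closes the argument.

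The main obstacle will be the first step: establishing the correct recursive decomposition and justifying that the final-descent length is a sufficient catalytic statistic. The greedy order is harder to handle than the ordinary Tamari order because cover relations are ``long range'' — swapping a down step with the \emph{longest} Dyck factor following a valley — so the effect of a cover on the remainder of the path is not local. Verifying that intervals can nonetheless be generated recursively while tracking only the final-descent length, and thereby deriving a manageable one-catalytic-variable equation, is the crux of the proof; once this equation is in hand, the kernel-method extraction and the identification with the constellation formula are, I expect, routine.
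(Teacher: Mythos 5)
Your overall strategy --- a one-catalytic-variable functional equation in the length of the final descent of the upper path, followed by an algebraic solution and comparison with the known closed formula for constellations --- is indeed the strategy of the paper, and your decision to treat the constellation count as a numerical coincidence rather than a bijection also matches (the paper only \emph{conjectures} a bijection). But the proposal has two genuine gaps. The first is the one you flag yourself: you do not supply the recursive decomposition, and it is not a routine adaptation of the ordinary Tamari case (indeed the concatenation-based decomposition used for ordinary intervals provably fails here). The paper's key combinatorial input is a new associative product $w_1 * w_2$ on $m$-Dyck paths (replace the rightmost peak of $w_1$ by $w_2$), which makes $\eD_m$ a \emph{free} graded monoid and, crucially, is compatible with the greedy cover relations, so that greedy intervals themselves form a free monoid. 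The decomposition then proceeds through a chain of intermediate families $\e{J}_0 \subset \e{J}_1 \subset \cdots \subset \e{J}_{m+1}$ of intervals, classified by which slots in the first-return factorisation of the lower path are empty, yielding a \emph{system} of $m+2$ equations rather than a single one. Without this (or an equivalent) structure, the ``long range'' difficulty you correctly identify is not overcome.

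The second gap is in the solution step. The resulting equation, $x^2\sI = t(x + x^2\sI\Delta)^{(m+2)}(1)$, is not of the form $P(F(t;x), F(t;1), t, x)=0$ for $m\ge 2$: iterating the divided-difference operator $\Delta$ introduces the specialisations $\sJ_1(1),\ldots,\sJ_m(1)$ (equivalently, successive derivatives of $\sI$ at $x=1$) as additional unknown univariate series. A single cancelling root $X(t)$ therefore does not suffice; one needs the full Bousquet-Mélou--Jehanne machinery with several roots, which produces an explicit algebraic equation only for small fixed $m$ and does not obviously yield the closed form for arbitrary $m$ --- the paper says as much. Instead, the paper guesses a rational parametrisation of $(t,x)$ by two auxiliary series $(\sZ,\sU)$, guesses closed forms for \emph{all} the $\sJ_i$, and verifies the whole system via polynomial identities proved in an appendix; Lagrange inversion is then applied to $\sZ$, which satisfies $t=\sZ(1-(m+1)\sZ)^m$, i.e.\ $\phi(z)=(1-(m+1)z)^{-m}$ rather than the $(1+y)^{m+1}$ you predict. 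So while your outline points in the right direction, both the derivation of the equation and its solution require ideas not present in the proposal.
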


Let us briefly recall from~\cite{mbm-schaeffer-constellations,lando-zvonkine-book} that an $m$-constellation is a rooted planar map (drawn on the sphere) whose faces are coloured black and white in such a way that
\begin{itemize}
\item all faces adjacent to a  white face are black, and vice-versa,
\item the degree of any black face is $m$,
\item the degree of any white face is a multiple of $m$.
\end{itemize}
Constellations are rooted by distinguishing  one of their edges. In Theorem~\ref{thm:numbers}, what we call \emm polygon, is a black face.  We refer to~\cite{mbm-schaeffer-constellations} for an alternative description of constellations in terms of factorisations of permutations and  for their bijective enumeration, to~\cite{bouttier-mobiles} for an alternative bijective approach, and to~\cite[Sec.~4.2]{fang-these} for a recursive approach.

\begin{example}\label{ex:simple}
When $m=n=2$, there are three $m$-Dyck paths of size $n$, namely
\[
  u=100100, \quad v=101000, \quad w=110000,
\]
and the greedy $2$-Tamari order is the total order $u<v<w$. Hence there are $6$ greedy intervals, and we can check that the formula of Theorem~\ref{thm:numbers} holds:
  \[
    \frac{(2+2) (2+1)^{1}}{(2\cdot 2+1)(2\cdot 2 +2)}\binom{(2+1)2}{2}=6.
  \]
The corresponding $6$ constellations with $n=2$ polygons (triangles, since $m+1=3$) are shown in Figure~\ref{fig:const}.

\begin{figure}[htb]   
\begin{center}
\includegraphics[height=20mm]{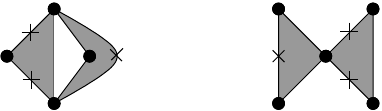} 
\end{center} 
\caption{The six  $3$-constellations with $2$ polygons. The crosses indicate the possible rootings.}
\label{fig:const}     
\end{figure}
\end{example}

\begin{example} Let us now take $m=1$ and $n=3$, and consider the greedy poset shown on the right of  Figure~\ref{fig:poset-n=3}: if we count, for all paths $u$ (taken from bottom to top and from left to right), the number of intervals of the form $[u,v]$, we find that the total number of intervals~is
  \[
    4+ 3 + 2 + 2 + 1 = 12 =   \frac{(1+2) (1+1)^{2}}{(3+1)(3+2)}\binom{2 \cdot 3}{3}.
  \]
  This is one less than the number of intervals in the ordinary Tamari lattice, shown on the left of Figure~\ref{fig:poset-n=3}.
\end{example}

\medskip

More generally, the number of intervals of $\eD_{m,n}$ in the ordinary $m$-Tamari order is~\cite{mbm-fusy-preville,ch06}:
\beq\label{ordinary}
\frac {m+1}{n(mn+1)} {(m+1)^2 n+m\choose n-1}.
\eeq
These numbers first arose, conjecturally, in the study of polynomials in three sets of $n$ variables on which the symmetric group $\Sn_n$ acts diagonally~\cite{bergeron-preville}. We are not aware of any occurrence of the numbers of Theorem~\ref{thm:numbers} in such an algebraic context.

\medskip
Our proof of Theorem~\ref{thm:numbers} follows  a recursive approach, similar to the proof of~\eqref{ordinary} in~\cite{mbm-fusy-preville}. We begin in Section~\ref{sec:defs} with various definitions and constructions on Dyck paths and greedy Tamari intervals. In Section~\ref{sec:dec}, we use them to write a functional equation for the \gf\ of these intervals (Proposition~\ref{prop:func-eq}). This \gf\ records not only the \emm size, $n$ of the interval $[v,w]$ (defined as the common size of $v$ and $w$), but also  the length of the longest suffix of down steps in $w$, called \emm final descent, of $w$. This parameter plays a ``catalytic'' role, in the sense that the functional equation cannot be written without it. Equations involving a catalytic variable abound in the enumeration of maps, and their solutions are systematically algebraic~\cite{mbm-jehanne}.  The enumeration of \emm ordinary, Tamari intervals also relies on a similar equation~\cite{mbm-fusy-preville}. In Section~\ref{sec:sol}, we solve our functional equation and express the bivariate \gf\ of intervals in terms of a pair of algebraic series (Theorem~\ref{thm:series}). In Section~\ref{sec:sol-ord} we prove that our approach applies to \emm ordinary, $m$-Tamari intervals as well, and derive a new proof of~\eqref{ordinary}, thus recovering the result of~\cite{mbm-fusy-preville}. We conclude in Section~\ref{sec:final} with a few comments and open questions. In particular, we conjecture the existence of a bijection between greedy intervals and constellations that would transform ascents of the top path into degrees of white faces (Conjecture~\ref{conj}).

\medskip 
As a side remark, our results were inspired by those obtained about
the number of intervals in the so-called \emm dexter semilattices,
in~\cite{chapoton-dexter}. In fact, it is expected that the $m=1$ greedy
Tamari posets are anti-isomorphic to the dexter posets, through a
simple bijection that goes from Dyck paths to binary trees, performs a
left-right-symmetry there and then comes back to Dyck paths by the
same bijection.

\section{$m$-Dyck paths and greedy partial order}
\label{sec:defs}
Let us fix $m\ge1$. We first complete the definitions introduced in the previous section.

The {\it height} of a vertex on an ($m$-)Dyck path is the $y$-coordinate of
this vertex. A  {\it contact} is a vertex of height $0$, distinct from the endpoints.  A {\it peak} is an occurrence of a factor $10^m$.
The unique Dyck path of size zero is called the empty Dyck path and
denoted by $\vide$.

The \emm final descent, of a Dyck path $w$ is the longest suffix of the form $0^k$, and we denote its length  by $d(w)=k$. If $w$ is non-empty, then $k\ge m$. Thus $w$ has at least one peak, consisting of the last up step and the $m$ down steps that follow it.

It is well known that every non-empty Dyck path $w$ admits a unique expression of the form $w=   1(w_1 0)(w_2 0)\cdots(w_m 0)w_{m+1}$,  where $w_1,\ldots,w_{m+1}$ are Dyck paths, possibly empty~\cite[Sec.~11.3]{lothaire1}.
We denote by $D(w_1, \ldots, w_m, w_{m+1})$ the Dyck path $1(w_1 0)(w_2 0)\cdots(w_m 0)w_{m+1}$.

\smallskip

Let us now return to the greedy Tamari order defined in the introduction, and to its ordinary counterpart. For both orders, if $v\le w$ then $v$ lies below $w$, in the sense that the height of the $i$th vertex of $v$ is at most the height of the $i$th vertex of $w$, for any $i$.

Moreover, the set $\eD_{m,n}$ equipped with the greedy $m$-Tamari order is order-isomorphic to an upper ideal of $\eD_{1,mn}$ equipped with
the greedy $1$-Tamari order.
  The same statement was already true for the ordinary Tamari order~\cite[Prop.~4]{mbm-fusy-preville}. 
  \begin{prop}
    The greedy $m$-Tamari order on $\eD_{m,n}$ is order-isomorphic to the greedy $1$-Tamari order on the upper ideal of $\eD_{1,mn}$ consisting of paths in which the length of every ascent is a multiple of $m$ (by  \emm ascent,, we mean a maximal sequence of up steps).
  \end{prop}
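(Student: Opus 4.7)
The plan is to exhibit an explicit bijection $\phi:\eD_{m,n}\to S$, where $S$ is the described set, and show that it intertwines the two greedy orders. The map $\phi$ dilates each up step of an $m$-Dyck path into $m$ consecutive up steps of size $(+1,+1)$, while leaving each down step unchanged; in terms of words, $\phi$ substitutes $1\mapsto 1^m$ and $0\mapsto 0$. Since the height variation of each up step is multiplied by $m$ while the number of up steps produced from it is also multiplied by $m$, the $y$-coordinate of every vertex of $w$ coincides with that of the corresponding vertex of $\phi(w)$. Hence $\phi(w)\in\eD_{1,mn}$, $\phi$ is injective, and its image consists of exactly those $1$-Dyck paths of length $2mn$ in which every ascent has length divisible by $m$; that is, the image of $\phi$ is $S$.

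The heart of the proof is the matching of cover relations. First, the valleys of $\phi(w)$ are in bijection with the valleys of $w$: a factor $01$ of $\phi(w)$ cannot occur inside a $1^m$-block, so the $1$ must be the first letter of such a block, coming from an up step of $w$ preceded by a down step. Next, I would establish the key structural fact that any Dyck factor $f'$ of $\phi(w)$ starting immediately after a valley is of the form $\phi(f)$ for an $m$-Dyck factor $f$ of $w$. Indeed, $f'$ begins at the start of a $1^m$-block and ends with a $0$; being balanced, any $1^m$-block it meets must be entirely contained, otherwise $f'$ would end inside such a block. So $f'$ decomposes as a concatenation of whole $1^m$-blocks and single down steps, and descends to a factor $f$ of $w$ with $k$ up steps and $mk$ down steps for some $k$. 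The height-preservation above then immediately implies that $f$ is itself an $m$-Dyck factor of $w$. Conversely, any $m$-Dyck factor of $w$ maps under $\phi$ to a $1$-Dyck factor of $\phi(w)$. Since $|\phi(f)|=2mk$ is a strictly increasing function of $k$, the longest Dyck factor after a valley in $\phi(w)$ is precisely the $\phi$-image of the longest Dyck factor after the corresponding valley in $w$.

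Combining these observations, the cover $w\cov w'$ in greedy $m$-Tamari, obtained by swapping a $0$ with the longest following Dyck factor, maps under $\phi$ to the cover $\phi(w)\cov\phi(w')$ in greedy $1$-Tamari obtained by the analogous swap in $\phi(w)$; conversely, every greedy $1$-Tamari cover above $\phi(w)$ in $\eD_{1,mn}$ arises in this way. Hence $\phi$ is a poset isomorphism onto its image, and since every cover above any element of this image lands again in the image, the image is an upper ideal of $\eD_{1,mn}$ for the greedy $1$-Tamari order, proving the proposition. The main obstacle will be the structural lemma on Dyck factors of $\phi(w)$: verifying that ``longest'' is preserved under the dilation requires the block-boundary analysis, showing that Dyck factors placed right after a valley must fully consume the $1^m$-blocks they meet.
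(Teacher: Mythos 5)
Your proof is correct and follows essentially the same route as the paper: the dilation $1\mapsto 1^m$, $0\mapsto 0$, together with a verification that valleys and the longest following Dyck factors correspond under this substitution, so that cover relations match and the image is an upper ideal. The paper states this in one sentence (its ``key property'' --- that a cover relation may merge ascents but never split one --- is just the terse form of your block-boundary analysis), so your write-up simply supplies the details the paper omits.
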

  As in the ordinary case,  the proof simply consists in replacing each up step of height $m$ in a path of $\eD_{m,n}$ by a sequence of $m$ up steps of height $1$. The key property is that a cover relation may merge two ascents, but never splits an ascent into several parts. For instance, on the right of Figure~\ref{fig:poset}, we recognize in the upper ideal generated by $11001100$ the greedy Tamari lattice on $\eD_{2,2}$ described in Example~\ref{ex:simple}.

  \subsection{A free monoid structure on $m$-Dyck paths}

In the enumeration of ordinary $m$-Tamari intervals~\cite{mbm-fusy-preville}, a useful property is the fact that, for two Dyck paths $v$ and $w$ of the same length, with $w=w_1 w_2$ written as the concatenation of two Dyck paths, we have $v\le w$ if and only if $v=v_1v_2$ for two Dyck paths $v_1$ and $v_2$ such that $v_1\le w_1$ and $v_2\le w_2$. This leads
to a recursive decomposition of ordinary intervals using the number of contacts (of the smaller element) as a catalytic variable. This property does not hold in the greedy case: for instance, when $m=1$ and $n=3$, the path $w=110010$ is not larger than $v=101010$, even though $w=w_1w_2$ and $v=v_1v_2$ with $v_1=1010< 1100=w_1$ and $v_2=w_2=10$. This problem may not be irremediable, but in this paper we take another route. In fact, our approach also gives a new solution for ordinary $m$-Tamari intervals.

We define on  $\eD_m:=\cup_{n\ge 1} \eD_{m,n}$ a new product, different from concatenation, and show that it endows $\eD_m$ with a structure of graded free monoid. In the next subsection, we will see that this new product is, in a sense, compatible with the greedy order.

Let $w_1$ and $w_2$ be non-empty Dyck paths in $\eD_m$. The product $w_1 * w_2$ is defined as
the Dyck path obtained
by replacing the rightmost peak of 
$w_1$ by $w_2$ (Figure~\ref{fig:product-paths}). One can check that this is associative, with unit the Dyck path
$10^m$ made of a single peak. This product is moreover graded with degree the size minus $1$. Indeed,  if we denote the size of $w$ by $|w|$, we have $|w_1 * w_2| = |w_1|+ |w_2|-1$, that is, $|w_1 * w_2|-1 = (|w_1|-1) + (|w_2|-1)$.
  
\begin{figure}[htp]
  \centering
  	\scalebox{0.9}{\input{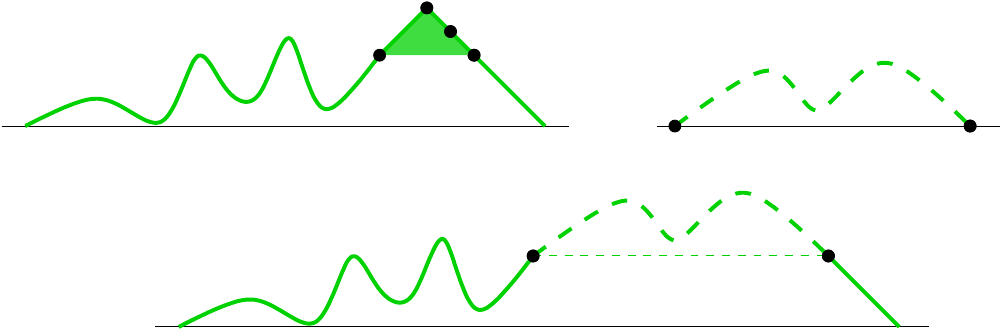_t}}
        \caption{The monoid structure on $m$-Dyck paths, when $m=2$.}
        \label{fig:product-paths} 
      \end{figure}

Here is an example with $m=2$, which shows that a path admits in general several expressions as a product:
\begin{equation*}
  1100{\color{red}100}00 * 110000 
  = 1{\color{red}100}00 * 100110000 = 110011000000.
\end{equation*}
This is because some of the factors can be further decomposed as products.
In fact, we have the following unique factorisation property.

\begin{prop}\label{prop:monoid_words}
  The set $\eD_m$ equipped  with the product $*$ is a free graded monoid on the 
  generators
$    D(w_1,w_2,\dots,w_{i-1},10^m,\vide,\dots,\vide)$,  with $1\leq i \leq m+1$ and  $w_j \in \eD_m \cup \{\vide\}$ for all $j$.
\end{prop}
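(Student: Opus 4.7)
The plan is to prove unique factorisation by strong induction on $|w|$, explicitly identifying the first factor of any factorisation from the canonical decomposition $D(w_1,\ldots,w_{m+1})$.

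\emph{Existence.} For $w \ne 10^m$, let $i \in \{1,\ldots,m+1\}$ be the largest index with $w_i \ne \vide$ (such $i$ exists since $w \ne 10^m$). Set $g := D(w_1,\ldots,w_{i-1},10^m,\vide,\ldots,\vide)$, which is a generator by definition. Its rightmost peak is the $10^m$ in slot $i$, and substituting $w_i$ there yields $g * w_i = D(w_1,\ldots,w_{i-1},w_i,\vide,\ldots,\vide) = w$, using that $w_l = \vide$ for $l > i$. Since $|w_i| < |w|$, the induction hypothesis furnishes a factorisation $w_i = g_2 * \cdots * g_k$ into generators, and associativity of $*$ gives $w = g * g_2 * \cdots * g_k$.

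\emph{Uniqueness.} Suppose $w = h * x$ with $h$ a generator and $x \in \eD_m$. Writing $h = D(u_1,\ldots,u_{j-1},10^m,\vide,\ldots,\vide)$, the definition of $*$ yields $w = D(u_1,\ldots,u_{j-1},x,\vide,\ldots,\vide)$, so the uniqueness of the canonical decomposition forces $u_l = w_l$ for $l<j$, $x = w_j$, and $w_l = \vide$ for $l>j$. Since $x$ is a non-empty Dyck path, $j$ must be the largest index with $w_j \ne \vide$, i.e.\ $j = i$; hence $h = g$ and $x = w_i$. Iterating on $w_i$ (which is strictly smaller than $w$) then gives uniqueness of the full factorisation into generators.

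The grading claim is immediate: the identity $|w_1 * w_2|-1 = (|w_1|-1)+(|w_2|-1)$ already recorded in the paper makes $\deg(w) := |w|-1$ additive under $*$, and every generator has degree at least $1$, the only element of degree $0$ being the identity $10^m$. I expect no substantive obstacle: the entire argument rests on the single observation that $*$ modifies exactly the rightmost non-empty slot of the canonical decomposition of its left operand, after which unique factorisation follows by standard induction on size.
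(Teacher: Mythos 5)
Your proof is correct, and while the existence half (peeling off the generator determined by the last non-empty slot of the canonical decomposition and inducting on size) is the same as the paper's, your freeness argument takes a genuinely different route. The paper proves uniqueness of factorisation indirectly, by computing the generating function of the free monoid on the listed generators, $t/(1-\s G/t)$ with $\s G = t^2(\s D^{m+1}-1)/(\s D -1)$, and observing that it coincides with $\s D - 1$, so that no relations can hold. You instead argue directly: writing $w = h * x$ with $h = D(u_1,\dots,u_{j-1},10^m,\vide,\dots,\vide)$ a generator, the product lands the non-unit path $x$ in slot $j$ and leaves all later slots empty, so uniqueness of the canonical decomposition $w = D(w_1,\dots,w_{m+1})$ pins down $j$ as the last non-empty slot, $h$ as the generator $g$, and $x$ as $w_i$; induction then finishes. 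Your argument is more elementary and self-contained (no counting needed), and it establishes as a byproduct exactly the content of Lemma~\ref{lem:first_factor}, which the paper has to extract separately as an observation after its proof. What the paper's route buys is mainly economy: the series $\s D$ and the count of generators come essentially for free from the decomposition $D(w_1,\dots,w_{m+1})$, and the cancellation in the geometric sum is immediate. Both proofs are complete; the one point worth making explicit in yours (and which your setup does cover, since $x\in\eD_m$ forces $x\neq\vide$, whereas a generator has $w_i=10^m$, the unit) is that a generator admits no factorisation into two or more generators.
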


\begin{proof}
  Let us first show that $\eD_m$ is generated by these generators. Let $w$
  be a non-empty Dyck path.
  Write $w=D(w_1,w_2,\dots,w_m, w_{m+1})$. If
  all $w_i$'s are empty, then $w$ is the unit $10^m$. Otherwise, let $i$
  be the largest index such that $w_{i}$ is non-empty, with $1\le i \le m+1$. Then
  $w = D(w_1,\dots,w_{i-1},10^m,\vide,\dots,\vide) * w_{i}$. This is the product of a generator by an element of~$\eD_m$, namely $w_{i}$, of smaller degree than  $w$.
  Thus $\eD_m$ is   generated by the listed  generators.

 For instance, when $m=2$, the above path $w=110011000000$ can be iteratively factored~as:
    \begin{align*}
      w &= 110000 * 100110000\\
        &= 110000 * 100100 * 110000\\
        &= D(100, \vide, \vide) *  D(\vide, \vide, 100)* D(100, \vide, \vide).
    \end{align*}
 
  We will now   use comparison of generating functions
  to prove that the   monoid is free. The generating function of $m$-Dyck paths, where the variable $t$ records the size, is the only \fps \ $\s D$ in the variable $t$ satisfying
  \begin{equation*}
    \s D = 1 + t \s D^{m+1}.
  \end{equation*}
  This follows from the  decomposition of Dyck paths as $D(w_1,w_2,\dots,w_m,w_{m+1})$.
  The generators are counted by 
  \begin{equation*}
   \s G :=  t^2 \sum_{i=1}^{m+1} \s D^{i-1}= t^2\frac{\s D^{m+1}-1}{\s D-1}
    =t - \frac{t^2}{\s D-1}.
  \end{equation*}
 Recall that the path $w_1 * \cdots * w_k$ has size $|w_1|+ \cdots + |w_k|-k+1$. Hence
  the free monoid on the listed  generators has size  generating function
  \[
    \frac t{1-\s G/t} = \s D-1,
  \]
  which coincides with the \gf\ of $\eD_m$. This implies that there are no relations between the generators. 
\end{proof}

Let us extract from the above proof an observation that  will be useful later.
\begin{lemma}
  \label{lem:first_factor}
  If $w=D(w_1,\dots,w_{i-1},w_{i},\vide,\dots,\vide)$ with $w_{i}\not = \vide$, then the first factor in the factorisation of $w$ is $D(w_1,\dots,w_{i-1},10^m,\vide,\dots,\vide)$.
\end{lemma}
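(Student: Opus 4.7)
The plan is to extract this from the existence step already present in the proof of Proposition~\ref{prop:monoid_words}, combined with uniqueness of factorisation in the free monoid. I will exhibit a concrete decomposition $w = g * w_i$ in which $g$ is a listed generator, and then appeal to freeness to conclude that $g$ is forced to be the first factor.

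First, I would set $g := D(w_1,\ldots,w_{i-1},10^m,\vide,\ldots,\vide)$. This is one of the generators listed in Proposition~\ref{prop:monoid_words}, since the $i$th slot is the unit $10^m$ and all slots past position $i$ are empty. The next task is to verify that $w = g * w_i$. For this I need to locate the rightmost peak of $g$. Expanding,
\[
g = 1(w_1 0)\cdots(w_{i-1}0)\,(10^m\cdot 0)\,(\vide\cdot 0)\cdots(\vide\cdot 0)\,\vide,
\]
and the only up step appearing strictly after the factor $(w_{i-1}0)$ is the $1$ opening the central $10^m$. Hence this $10^m$ is indeed the rightmost peak of $g$. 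Replacing it by $w_i$, as prescribed by the definition of the $*$ product, transforms $g$ into $D(w_1,\ldots,w_{i-1},w_i,\vide,\ldots,\vide)$, which is exactly $w$. So $w = g * w_i$.

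To conclude, I invoke Proposition~\ref{prop:monoid_words}: since $\eD_m$ is freely generated by the listed family and $g$ belongs to that family, the unique factorisation of $w$ into generators is obtained by concatenating $g$ with the unique factorisation of $w_i$. In particular, the first factor of $w$ is $g$, as claimed.

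I do not foresee a serious obstacle; the only step that requires a moment of thought is the identification of the rightmost peak of $g$, which boils down to the explicit expansion of $D(\cdot)$ above and the observation that the slots $i+1,\ldots,m+1$, being empty, contribute only down steps after the peak in slot $i$.
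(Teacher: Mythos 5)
Your proof is correct and follows essentially the same route as the paper, which obtains this lemma precisely by extracting from the existence step of Proposition~\ref{prop:monoid_words} the identity $w = D(w_1,\dots,w_{i-1},10^m,\vide,\dots,\vide) * w_i$ and then invoking freeness. Your explicit verification that the slot-$i$ factor $10^m$ is the rightmost peak of $g$ is the only detail the paper leaves implicit, and it is carried out correctly.
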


\subsection{A free monoid structure on intervals}

The monoid structure $*$ on $\eD_m$ is compatible with the greedy Tamari
order, in the following sense.

\begin{prop}
  \label{covering_and_product}
  Let $v = v_1 * v_2$ be a non-empty Dyck path. Let $v \cov w$ be a
  greedy cover relation.  Then either $w = w_1 * v_2$ where $w_1$ covers $v_1$, or $w=v_1 * w_2$ where $w_2$ covers $v_2$.
 
  Conversely, every cover relation $v_1 \cov w_1$ gives a
  cover relation $v_1 * v_2 \cov w_1 * v_2$ and every cover relation
  $v_2 \cov w_2$ gives a cover relation $v_1 * v_2 \cov v_1 * w_2$.

 Consequently, for any non-empty Dyck path $v = v_1 * v_2$, the upper ideal $\{w: v \leq w\}$ is $\{w_1 *w_2: v_1\le w_1 \text{ and } v_2\le w_2\}$.
\end{prop}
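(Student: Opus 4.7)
The plan is to analyze the cover relations of $v_1 * v_2$ by decomposing $v_1 = \alpha \cdot 10^m \cdot 0^s$, where the rightmost peak $10^m$ involves the last up step of $v_1$ and $0^s$ is the descending tail after that peak ($s \geq 0$). By the definition of the product, $v_1 * v_2 = \alpha \cdot v_2 \cdot 0^s$. Since the block $10^m \cdot 0^s$ contains no $01$ factor and $v_2$ is a non-empty Dyck path (hence starts with $1$ and ends with $0$), the valleys of $v_1 * v_2$ fall into three disjoint types: (A) valleys strictly inside $\alpha$, (B) a single valley at the boundary between $\alpha$ and $v_2$, present iff $\alpha$ ends with a down step, and (C) valleys strictly inside $v_2$. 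The valleys of $v_1$ similarly split into those inside $\alpha$ and a boundary valley between $\alpha$ and $10^m$ under the same condition, giving natural bijections between valleys of $v_1$ and Type A/B valleys of $v_1 * v_2$, and between valleys of $v_2$ and Type C valleys of $v_1 * v_2$.

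For a Type A or B valley, I would compare the longest Dyck factor $F$ following the down step in $v_1 * v_2$ with the longest Dyck factor $F_1$ following the corresponding down step in $v_1$. The key observation is that both $10^m$ and $v_2$ are Dyck paths based at the height of the last up step of $v_1$, so they each keep the height at or above that base and return to it; the common tail $0^s$ then determines where $F$ and $F_1$ terminate. A case-by-case height analysis shows that $F$ equals $F_1$ with $10^m$ replaced by $v_2$ (or $F = F_1$ when $F_1$ lies entirely inside $\alpha$). Hence the swap in $v_1 * v_2$ agrees with the swap in $v_1$ outside the replacement region, yielding $w = w_1 * v_2$ where $w_1$ is the result of the corresponding cover relation in $v_1$. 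For a Type C valley, $F$ is forced to stay inside the $v_2$-portion: either it drops below its base within $v_2$, or it extends to the end of $v_2$ but the tail $0^s$ (or the end of the path, if $s=0$) prevents any further extension. Thus $F$ coincides with the longest Dyck factor after the corresponding valley in $v_2$, and the swap produces $w = v_1 * w_2$ with $v_2 \cov w_2$. Reading the same bijections in the opposite direction yields the converse assertions.

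The description of the upper ideal $\{w : v \leq w\}$ then follows by induction on the length of a saturated chain $v = u_0 \cov u_1 \cov \cdots \cov u_k = w$: the case analysis above ensures that each $u_j$ admits a factorization $a_j * b_j$ with $v_1 \leq a_j$ and $v_2 \leq b_j$, so in particular $w = w_1 * w_2$ with $v_1 \leq w_1$ and $v_2 \leq w_2$. Conversely, given such $w_1$ and $w_2$, iterating the converse statement yields $v_1 * v_2 \leq w_1 * v_2 \leq w_1 * w_2$.

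The main technical obstacle is the Type A case in which the factor $F_1$ in $v_1$ straddles the rightmost peak $10^m$: a careful height count is required to verify that $F_1$ and $F$ terminate at matching positions, either within $\alpha$ or in the shared tail $0^s$, so that the two swaps differ precisely by the replacement $10^m \leftrightarrow v_2$.
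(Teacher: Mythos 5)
Your proof is correct and follows essentially the same route as the paper's: a case analysis on whether the valley's down step lies in the $v_1$-part or the $v_2$-part of $v_1*v_2=\alpha\, v_2\, 0^s$, showing that the swapped Dyck factor corresponds to the one in $v_1$ (up to replacing $10^m$ by $v_2$) or to the one in $v_2$, and then obtaining the upper-ideal statement by iterating cover relations. The only difference is that you carry out explicitly the height bookkeeping (including the straddling case) that the paper leaves to the reader and to its figures.
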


\begin{proof}
  Let us begin with the first statement. Consider the down step of   the valley of $v$ involved
  in the cover relation $v \cov w$. If this down step  comes from 
  $v_1$ (as in Figure~\ref{fig:compatibility1}), it is the first step of a valley  of $v_1$, and hence
  defines a cover relation $v_1 \cov w_1$. This cover relation may or may not
  increase  the height of the rightmost peak of~$v_1$ (in the example of Figure~\ref{fig:compatibility1} this height increases). In both cases, by
  definition of the product, one finds that $w_1 * v_2 = w$.

  \begin{figure}[htp]
  \centering
  	\scalebox{0.9}{\input{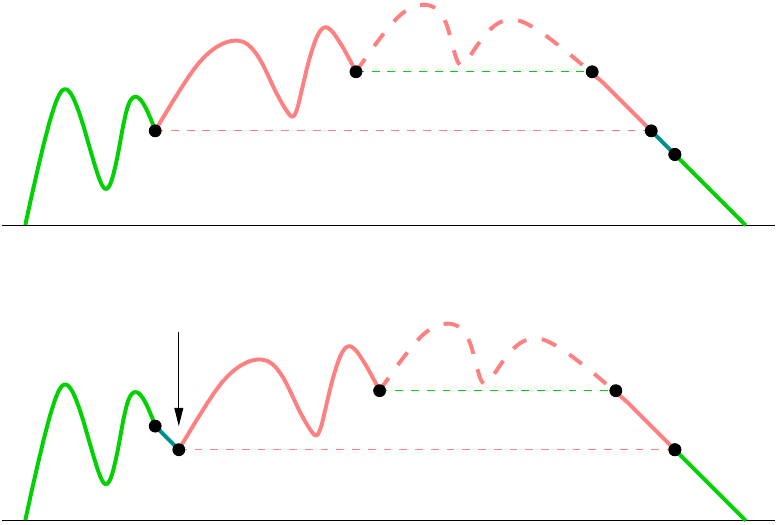_t}}
        \caption{First type of cover relation $v\cov w$ when $v=v_1 *v_2$. The cover relation involves a down step of $v_1$.}
        \label{fig:compatibility1} 
      \end{figure}
 
  If the down step involved in the cover relation $v \cov w$ comes instead from 
  $v_2$ (Figure~\ref{fig:compatibility2}), then the  next up step also comes from $v_2$, hence it defines a cover relation
  $v_2 \cov w_2$. By definition of the product, one finds 
  that   $v_1 * w_2 = w$. 

   \begin{figure}[htp]
  \centering
  	\scalebox{0.9}{\input{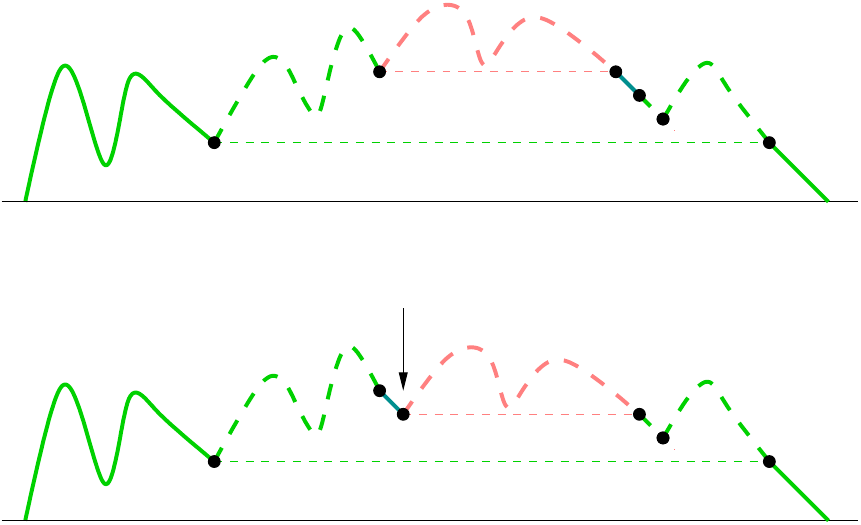_t}}
        \caption{Second type of cover relation $v\cov w$ when $v=v_1 *v_2$. The cover relation involves a down step of $v_2$.}
        \label{fig:compatibility2} 
      \end{figure}

  The second   statement is checked similarly from the definitions, and the third one follows by iteration of cover relations.
\end{proof}

\begin{figure}[htp]
  \centering
  	\scalebox{0.9}{\input{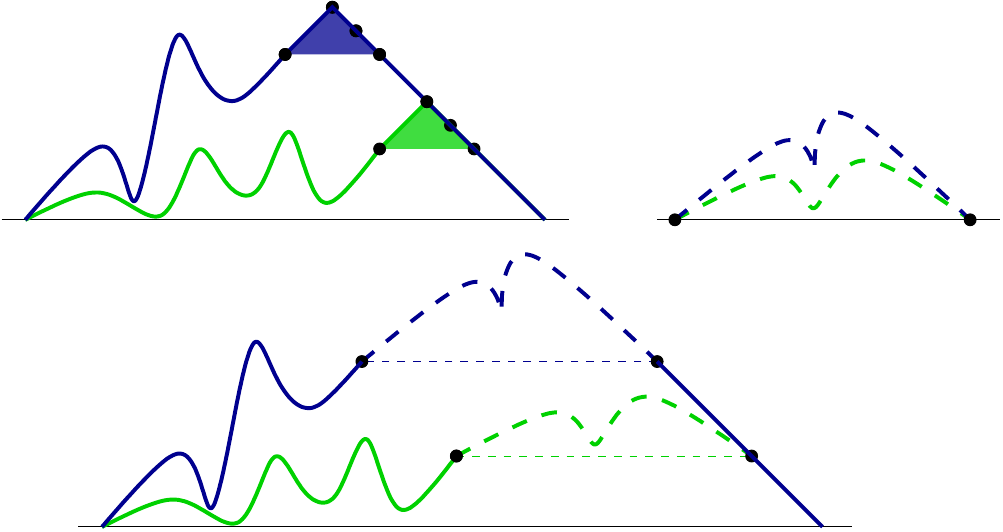_t}}
        \caption{The monoid structure on greedy intervals, when $m=2$.}
        \label{fig:product-intervals}
      \end{figure}

This lemma allows us to define a monoid structure on the set of all intervals of $\eD_m= \cup_{n\ge 1} \eD_{m,n}$ as follows. We  intentionally use for this monoid the same symbol $*$ as for the monoid on Dyck paths.

Let $[v_1,w_1]$ and $[v_2,w_2]$ be intervals. By Proposition~\ref{covering_and_product}, we have $v_1*v_2\le  w_1*w_2$.  We define the product
$[v_1,w_1]*[v_2,w_2]$ to be the interval $[v_1*v_2, w_1*w_2]$ (see Figure~\ref{fig:product-intervals}).
For instance, for $m=1$ we have $[10{\color{red}{10}}, 1{\color{red}{10}}0] * [110010, 111000]= [10110010, 11110000]$.
Since~$*$ is associative on Dyck paths, it is also associative on intervals. The unit is the interval $[10^m,10^m]$. We denote by $\e{I}_m$ the monoid of intervals of positive size. 

\begin{prop}\label{prop:Im}
  The monoid $\e{I}_m$ is free over the   generators $[v,w]$ where  $v$ is a generator of the monoid $\eD_m$, listed in Proposition~\ref{prop:monoid_words}, and $w$ any Dyck path larger than or equal to $v$.
\end{prop}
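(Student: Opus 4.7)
The approach is to show, for every $[V,W]\in\e{I}_m$, the existence and uniqueness of a factorisation as a product of the listed generators; freeness of $\e{I}_m$ then follows. Existence is the straightforward half: if $V=10^m$ then $W=10^m$ as well and $[V,W]$ is the empty product, while otherwise Proposition~\ref{prop:monoid_words} yields a unique factorisation $V=V_1*\cdots*V_k$ with each $V_i$ a generator of $\eD_m$. Iterating the third statement of Proposition~\ref{covering_and_product} lifts this to $W=W_1*\cdots*W_k$ with $V_i\leq W_i$ for every~$i$, whence $[V,W]=[V_1,W_1]*\cdots*[V_k,W_k]$ is a product of listed generators.

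For uniqueness, consider two such factorisations of $[V,W]$. Taking bottom components gives two generator factorisations of $V$ in $\eD_m$, which must coincide by Proposition~\ref{prop:monoid_words}. So the number $k$ of factors and the generators $V_i$ are determined by $V$, and since each $[V_i,W_i]$ is an interval the sizes $|W_i|=|V_i|$ are also determined. The whole question thus reduces to showing that a factorisation $W=W_1*\cdots*W_k$ is pinned down by the prescribed block sizes $|W_1|,\ldots,|W_k|$.

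To settle this last point I would invoke the unique generator factorisation $W=h_1*\cdots*h_\ell$ of $W$ in $\eD_m$. Any factorisation $W=W_1*\cdots*W_k$ must group these generators into $k$ consecutive blocks, corresponding to indices $0=a_0<a_1<\cdots<a_k=\ell$ with $W_i=h_{a_{i-1}+1}*\cdots*h_{a_i}$, and the grading of Proposition~\ref{prop:monoid_words} gives
\[
|W_i|-1=\sum_{j=a_{i-1}+1}^{a_i}(|h_j|-1).
\]
The crux, and really the only subtle point of the proof, is that every listed generator of $\eD_m$ has size at least~$2$ (which is immediate from its explicit form), so each summand $|h_j|-1\geq 1$ and the partial sums $\sum_{j\leq a}(|h_j|-1)$ are strictly increasing in~$a$. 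Hence the split indices $a_i$ are uniquely determined by $|W_1|+\cdots+|W_i|$, which concludes the uniqueness argument.
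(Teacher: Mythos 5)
Your proof is correct and follows essentially the same route as the paper: existence by lifting the unique factorisation of $V$ in $\eD_m$ through Proposition~\ref{covering_and_product}, and uniqueness by noting that the $V_i$'s (hence the sizes $|W_i|$) are forced, and that a factorisation of $W$ is pinned down by its block sizes via the generator factorisation of $W$. Your explicit remark that every generator of $\eD_m$ has size at least~$2$, so that the partial degree sums are strictly increasing, is exactly the point the paper leaves implicit in its parenthetical argument, and it is a welcome clarification.
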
 

\begin{proof}
  Let $[v,w]$ be an interval that is  not the unit of
  $\e{I}_m$. By unique factorisation in the monoid $\eD_m$, one can write
  $v = v_1 * v_2 * \dots * v_k$ for some generators $v_i$. By
  Proposition~\ref{covering_and_product}, 
  we have $w = w_1 * w_2 * \dots * w_k$ where $v_i \le w_i$ for each $i$.
  Consequently, $[v,w]= [v_1, w_1] * \cdots * [v_k, w_k]$. Hence the elements $[v,w]$, for $v$ a generator of $\eD_m$ and $w\ge v$, indeed generate the monoid~$\e{I}_m$.

For instance, for $m=1$ and the above interval $[v,w]= [10110010, 11110000]$, the factorisation of $v$ is $v_1*v_2= 1010 * 110010= D(\vide, 10) * D(10, 10)$. Hence we write $w$ as the product $w_1 *w_2=1100* 111000$ of a word of size $2$ and a word of size $3$, and obtain $[v,w]=[v_1, w_1] * [v_2, w_2]$.
  
  Let us now pick any factorisation of $[v,w]$ as a product of
  generators $[v_i,w_i]$. That is,  $v = v_1 * \cdots * v_k$ and  $w = w_1 *  \cdots * w_k$ with $v_i\le w_i$ for all $i$. Since the $v_i$'s are generators of the free monoid $\eD_m$, the sequence $(v_1, \ldots, v_k)$ is uniquely determined by $v$. Moreover, given that~$v_i$ and $w_i$ have the same size for every $i$, the sequence $(w_1, \ldots, w_k)$ is uniquely determined by~$(v,w)$. (Indeed, given a product  $u*u'$, one can recover $u$ and $u'$ from  $u*u'$ as soon as  we  know the size of $u'$: one computes the unique factorization $u_1 * \cdots * u_\ell$ of $u*u'$ in the generators of Proposition~\ref{prop:monoid_words}, and then $u'$ is the only factor $u_j * \cdots * u_\ell$, with $1\le j\le \ell$, that has the same size as $u'$.)
    This proves uniqueness of the factorisation for the interval $[v,w]$, and concludes the proof.
\end{proof}

\section{Recursive decomposition of greedy intervals}
\label{sec:dec}

The aim of this section is to establish a functional equation that characterises the \gf\ $\sI$ of greedy $m$-Tamari intervals. In the series $\sI$, the interval $[v,w]$ is weighted by $t^n x^d$, where $n\ge 1$ is the common size of $v$ and $w$ (called the size of the interval) and $d=d(w)$ is the length of the final descent of $w$. The interval $[\vide, \vide]$ is thus not counted in $\sI$. The functional equation for $\sI$ involves the divided difference operator $\Delta$ defined by:
  \beq\label{Delta-def}
    \Delta \s F := \frac{\s F - \s F(1)}{x-1},
  \eeq
where, for a \fps\ $\s F$ in $t$ with coefficients in $\qs[x]$ (the ring of polynomials in~$x$), we denote by $\s F(1)$ the specialisation of $\s F$ at $x=1$. 

\begin{prop}\label{prop:func-eq}
  The bivariate \gf\ $\sI$ of greedy $m$-Tamari intervals is the unique solution of the following equation:
\[ 
  x^2\sI =t(x+x^2\sI\Delta)^{(m+2)}(1),
  \] 
  where the first $x$ in the operator $(x+x^2\sI\Delta)$ stands for the multiplication by $x$, and the exponent $(m+2)$ means that the operator is applied $m+2$ times.
\end{prop}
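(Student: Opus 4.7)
My plan is to derive the equation by a recursive decomposition of greedy Tamari intervals based on the $D$-decomposition of the top path $w = D(w_1, \ldots, w_{m+1}) = 1(w_1 0)(w_2 0) \cdots (w_m 0) w_{m+1}$, which splits $w$ into an initial up step and $m+1$ (possibly empty) sub-paths. The free-monoid compatibility of $*$ with the greedy order (Propositions~\ref{covering_and_product} and~\ref{prop:Im}) ensures that each non-empty sub-path $w_j$ contributes an independent sub-interval counted by $\sI$, while empty slots contribute only to the final descent of $w$. The precise dependence $d(w) = d(w_j) + (m+1-j)$, when $w_j$ is the last non-empty slot (or $d(w)=m$ when $w = 10^m$), is exactly what makes the final descent a genuine catalytic variable for the recursion.

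Setting $\Phi := x + x^2\sI\Delta$, I would interpret each application of $\Phi$ as processing one of the $m+2$ items in the above structure (the initial up step together with the $m+1$ slots of the $D$-decomposition). The $xF$-branch corresponds to an empty slot: it adds one unit to the $x$-exponent, which is tracking an accumulated trailing descent. The $x^2\sI\Delta F$-branch corresponds to a non-empty slot: a sub-interval counted by $\sI$ is inserted, and the divided difference $\Delta$ realises the ``catalytic merging'' required, because when a sub-interval is inserted after $k$ trailing empty slots have already been accumulated, the final descent of the big path becomes the sub-interval's descent plus $k$. The expansion $\Delta F = \sum_k f_k(x^{k-1} + \cdots + 1)$ encodes this merging as a sum over the $k$ possible interactions of the inserted sub-interval with the accumulated state. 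The forced first step $\Phi(1) = x$, arising from $\Delta(1) = 0$, mirrors the fact that one of the $m+2$ positions (namely the initial up step) cannot carry an inserted sub-interval.

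Iterating $\Phi$ exactly $m+2$ times starting from $1$ then enumerates all possible configurations of filled/empty slots together with all recursive insertions. The outer factor of $t$ accounts for the size contribution of the initial up step of $w$, and the $x^2$ on the left-hand side normalises for the two excess $x$-factors produced by the iteration that do not correspond to actual descent contributions of the final $w$ (namely one from the forced first $U$-step and one from the rightmost slot $w_{m+1}$).

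The main obstacle will be making the recursive interpretation fully rigorous, particularly the matching between the algebraic operator $\Delta$ and the combinatorial reset of the catalytic variable. The equation is self-referential (with $\sI$ appearing inside $\Phi$), reflecting that an inserted sub-interval itself contains further insertions; one must verify that this nesting enumerates every greedy interval exactly once, consistent with the unique factorisation of intervals into generator intervals from Proposition~\ref{prop:Im}. Uniqueness of the solution is then automatic from the catalytic-variable equation structure.
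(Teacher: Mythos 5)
Your proposal decomposes the \emph{upper} path $w$ via $w=D(w_1,\dots,w_{m+1})$ and rests on the claim that ``each non-empty sub-path $w_j$ contributes an independent sub-interval counted by $\sI$''. That claim is false, and the propositions you cite do not support it: Propositions~\ref{covering_and_product} and~\ref{prop:Im} say that \emph{upper} ideals are multiplicative over the $*$-factorisation of the \emph{lower} path $v$ (if $v=v_1*v_2$ then $\{w: w\ge v\}=\{w_1*w_2: w_i\ge v_i\}$); they say nothing about the lower ideal of $w$ splitting along the $D$-decomposition of $w$. A concrete counterexample with $m=1$, $n=3$: take $w=110100=D(1010,\vide)$. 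In the greedy order $101010\cov 110100$ and $110010\cov 110100$, so three paths lie below $w$; but $1010$ is minimal in $\eD_{1,2}$, so a slot-by-slot product would predict a single lower path, namely $w$ itself. Worse, $101010=D(\vide,1010)$ does not even have its non-empty slots in the same positions as $w$, so the paths $v\le w$ cannot be parametrised by choices of $v_j\le w_j$ in each slot at all. Hence the iteration $(x+x^2\sI\Delta)^{(m+2)}(1)$ cannot be read as ``one application of the operator per slot of $w$''.

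The paper's proof runs in the opposite direction: it stratifies intervals by the shape of the \emph{bottom} path, setting $\e{J}_i=\{[v,w]: v=D(v_1,\dots,v_i,\vide,\dots,\vide)\}$, peels off the first generator $[v',w']\in\e{K}_i$ of the free monoid of intervals (so the factor $\sI$ in the equation is the complementary factor $[v'',w'']$, not a slot of $w$), and then reduces $\e{K}_i$ to $\e{J}_{i-1}$ by deleting the last peak of both $v$ and $w$. The divided difference arises there from a precise combinatorial choice: the height $h\in\llbracket m+1-i,\,d(w')\rrbracket$ at which the deleted peak of $w$ is re-inserted into the final descent of $w'$, giving $\sum_{h=m+1-i}^{d(w')}x^{m+h}=x^m\,\bigl(x^{d(w')+1}-x^{m+1-i}\bigr)/(x-1)$. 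Your sketch leaves the combinatorial meaning of $\Delta$ unidentified (``the $k$ possible interactions \dots\ with the accumulated state''), and in the top-path decomposition there is no such choice to be found. To make the argument work you would need to redo it on the lower path as above, or else supply a genuinely new description of the lower ideal of $w$ --- which, in view of the counterexample, cannot be the naive slotwise one.
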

We will  denote $\hI= x^2\sI$, so that the above equation reads
\[ 
\hI=t(x+\hI \Delta)^{(m+2)}(1).
\] 

\noindent
{\bf Example.} When $m=1$, the equation reads
\begin{align*}
  \hI& =t(x+\hI \Delta)^{(3)}(1)\\
     &=t(x+\hI \Delta)^{(2)}(x) \\
  &=  t(x+\hI \Delta)(x^2+\hI)\\
  & = t x(x^2+\hI) + t\hI \left( x+1 + \frac{\hI-\hI(1)}{x-1}\right).
\end{align*}

In order to prove the above
proposition, we first introduce several families of greedy intervals, and provide recursive decompositions for them.
These decompositions translate into a system of functional equations for the corresponding   bivariate \gfs.  This system finally results in the above proposition. Solving the above functional equation (and in fact, the entire system) will be  the topic of Section~\ref{sec:sol}.
 
\subsection{Some families of intervals}
Recall that we only consider intervals of positive size, ignoring the interval $[\vide, \vide]$. The set of all such intervals has been so far denoted by $\e{I}_m$ so far, but we will now drop the index $m$ and simply write $\e I$. We  introduce the following collection of subsets of $\e{I}$.

\begin{definition}\label{def:Ji}
  For $i \in \llbracket 0, m+1\rrbracket$,  let
  $\e{J}_i$ denote the  set of  intervals $[v,w]$ such that the minimum~$v$ is   of the form  $D(v_1,v_2,\dots,v_i,\vide,\vide,\dots,\vide)$, for some $v_1, \ldots,  v_i$ in $\e D_m \cup \{\vide\}$.
\end{definition}
Observe  that $\e{J}_0$ only contains   the unit interval $[10^{m},10^{m}]$,
  and that $\e{J}_{m+1}=\e{I}$. Denoting by $\sJ_i$ the bivariate \gf\ of $\e{J}_i$, we will establish the following system:
  \[
    \left\{
      \begin{array}{ll}
        \sJ_0&= x^mt,\\
        \sJ_i  &=\displaystyle\sJ_{i-1} +  \sI \frac{x\sJ_{i-1} -x^{m+1-i} \sJ_{i-1}(1)}{x-1} \qquad \text{for } i>0.\\
           \end{array}
    \right.
  \]
As explained in Section~\ref{sec:single-eq}, Proposition~\ref{prop:func-eq} easily follows.

As an intermediate step in the decomposition of the intervals of $\e{J}_i$, it will be convenient to introduce the following subset of $\e{J}_i$. 

\begin{definition}
  For $i \in \llbracket 1  , m+1\rrbracket$,
  let  $\e{K}_i$ denote the subset of $\e{J}_i$ consisting of intervals $[v,w]$ such that $v$ is of the form
   $D(v_1,v_2,\dots,v_{i-1},10^m,\vide,\vide,\dots,\vide)$.
\end{definition}

The associated bivariate generating function is denoted by $\s{K}_i$.

\subsection{Description of $\e{J}_i$}
\label{sec:Ji}

 If $i=0$, then $\e{J}_i=\e{J}_0$ is reduced to $[10^m, 10^m]$, so that $×\sJ_0=x^mt$.  We now assume $i\ge 1$.

 Let $[v,w]$ be an interval of $\e{J}_i$, and write $v=D(v_1, \ldots, v_i, \vide, \ldots, \vide)$. If $v_i=\vide$, then $[v,w]$ is any interval of $\e{J}_{i-1}$. Let us now assume that $v_i \not = \vide$.   Let us write $[v,w]= [v', w'] *[v'', w'']$, where $[v',w']$ is a generator of the free monoid $\e I_m\equiv\e{I}$ (see Proposition~\ref{prop:Im}).
 Recall that $v'$ is also the first factor in the factorisation of $v$ in the free monoid~$\eD_m$, so that, by Lemma~\ref{lem:first_factor}, $v'=D(v_1,\dots,v_{i-1}, 10^m,\vide, \ldots, \vide)$ (and $v''=v_i$). This means that $[v',w']$ belongs to $\e{K}_i$. Let us denote $\phi ([v,w]) = ( [v',w'], [v'', w''])$ (see Figure~\ref{fig:dec-Bj}).

For instance, for $m=1$ and $[v,w]=[110100101100,1101 111000    00]$, we have $v=D(1010, \allowbreak 101100)$ so that $[v,w] \in \e{J}_2$. The path $v$ factors as  $v= v'*v''$ with $v'= 11010010= D(1010,10)$ and $v''= 101100$. The interval $[v,w]$ decomposes as $ [v', w'] *[v'', w'']$ where $w'=11011000$ and $w''= 111000$, and indeed $ [v', w'] \in \e{K}_2$.

 Conversely, take $[v',w']$ in $\e K_i$, and $[v'',w'']$ in $\e{I}$. Observe that $v'$ is a generator of $\eD_m$, and $[v',w']$ a generator of $\e{I}_m$. Form the interval $[v,w]:= [v', w'] *[v'', w'']$. Then $[v',w']$ is the first generator in the factorisation of $[v,w]$ and $\phi ([v,w]) = ( [v',w'], [v'', w''])$.

 We have thus described a bijection $\phi$ that maps $\e{J}_i \setminus \e{J}_{i-1}$ onto $\e K_i \times \e{I}$. Moreover, if $\phi ([v,w]) = ( [v',w'], [v'', w''])$, then $|w|=|w'|+|w''|-1$ and the final descent of $w$ has length $d(w)=d(w')+d(w'')-m$.
This gives the following identity:
\begin{equation}
  \label{B_de_CA}
  {\sJ_i}= \sJ_{i-1}   + \frac{\s{K}_i}{x^m t} {\sI}.
\end{equation} 

\begin{figure}[htp]
  \centering
  	\scalebox{0.9}{\input{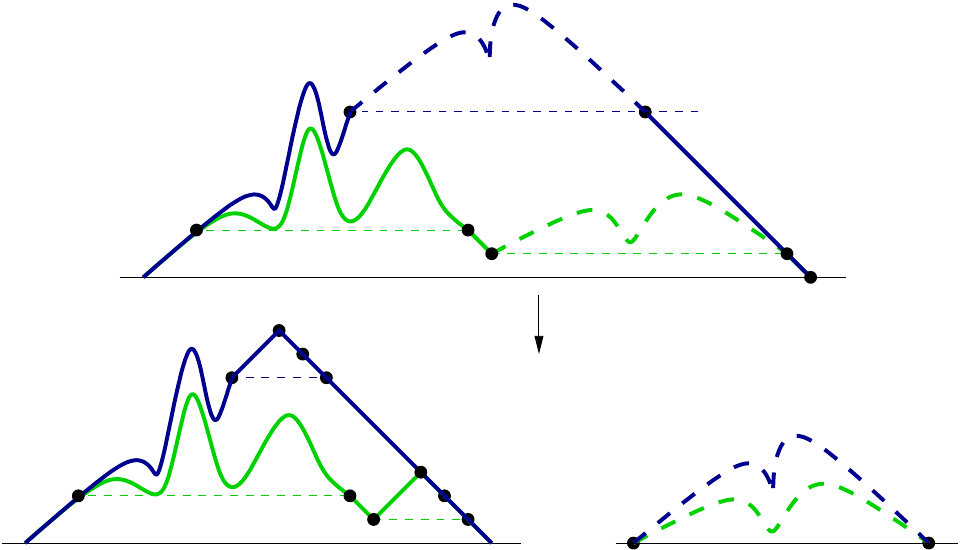_t}}
        \caption{Decomposition of an interval $[v,w]$ of $\e{J}_i\setminus \e{J}_{i-1}$, with $m=2$ and $i=2$.}
        \label{fig:dec-Bj} 
\end{figure}
   
\subsection{Description of $\e{K}_i$}

Let us begin with a simple observation.
  \begin{lemma}\label{peak-delete}
Consider a cover relation $v \cov w$, where $v$ and $w$ are non-empty. Let $v'$ (resp.~$w'$) be obtained by deleting the last peak of $v$ (resp.~$w$). Then either $v'=w'$ (if the cover relation takes place in the last valley of $v$) or otherwise $v' \cov w'$. Consequently, if $v \le w$ and $v'$ and $w'$ are obtained as above, then $v'\le w'$.
  \end{lemma}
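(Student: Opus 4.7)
My plan is a case analysis tracking where the last up step of $v$ sits relative to the valley used by the cover. Write the cover as $v = \alpha \cdot 0 \cdot F \cdot \beta \;\cov\; \alpha \cdot F \cdot 0 \cdot \beta = w$, where the down step of the valley is at position $i$, $\alpha$ is the prefix of $v$ of length $i-1$, and $F$ is the longest Dyck factor of $v$ starting at position $i+1$. Let $j$ denote the position of the last up step of $v$; since $F$ begins with an up step, $j \ge i+1$. The dichotomy is decided by whether $j$ lies in $F$ or in $\beta$.

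\emph{Case 1.} Suppose $j > i+|F|$, so that the whole last peak of $v$ lies in $\beta$, away from the cover. Writing $\beta'$ for $\beta$ with its last peak removed, one has $v' = \alpha \cdot 0 \cdot F \cdot \beta'$ and $w' = \alpha \cdot F \cdot 0 \cdot \beta'$. I would then check that $F$ is still the longest Dyck factor starting at position $i+1$ in $v'$: the heights up to position $i+|F|$ are unchanged, and the step just after $F$ in $v'$ agrees with that in $v$, which by maximality of $F$ in $v$ must be a down step descending below $F$'s starting height. Hence the greedy cover at the valley $(i,i+1)$ of $v'$ uses the same $F$ and yields $w'$, so $v' \cov w'$.

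\emph{Case 2.} Now suppose $i+1 \le j \le i+|F|$, so the last up step of $v$ lies inside $F$. Since $j$ is the last up step, every subsequent step of $v$ is a down step, so $\beta = 0^c$ for some $c \ge 0$, and $F$ factors as $F = G \cdot 1 \cdot 0^{r}$, with $G$ the portion of $F$ strictly before its last up step and $r = i+|F|-j$. A short height argument---$G$ is a prefix of the Dyck factor $F$, so it never drops below $F$'s starting height---forces $r \ge m$, so the last peak lies entirely inside $F$. Bookkeeping the deletion of the last peak then yields
\[
v' = \alpha \cdot 0 \cdot G \cdot 0^{r+c-m}, \qquad w' = \alpha \cdot G \cdot 0^{r+c-m+1}.
\]
If $G$ is empty---equivalently $j = i+1$, in which case $F$ is precisely the last peak $10^m$ and the cover is the situation the statement refers to as ``taking place in the last valley of $v$''---both sides collapse to $\alpha \cdot 0^{r+c-m+1}$ and $v' = w'$. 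Otherwise $G$ is non-empty; the valley at $(i,i+1)$ survives in $v'$, and I would identify the longest Dyck factor at position $i+1$ in $v'$ as $F_{v'} := G \cdot 0^{r-m}$, which returns to $F$'s starting height after the $r-m$ down steps following $G$ and cannot be extended any further. The greedy cover at $(i,i+1)$ in $v'$ then swaps the $0$ with $F_{v'}$ and produces $\alpha \cdot G \cdot 0^{r-m} \cdot 0 \cdot 0^c = w'$, giving $v' \cov w'$.

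The main technical subtlety is this last identification of $F_{v'}$: the longest Dyck factor at the relevant valley in $v'$ is not (the image of) $F$ itself, but $G$ extended by the $r-m$ down steps that were originally just past the deleted peak in $v$. Once the dichotomy is in hand the final assertion follows immediately: from any saturated chain $v = v_0 \cov v_1 \cov \cdots \cov v_k = w$ witnessing $v \le w$, the dichotomy gives $v'_i = v'_{i+1}$ or $v'_i \cov v'_{i+1}$ at each step, hence $v' = v'_0 \le v'_k = w'$.
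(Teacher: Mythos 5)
Your proof is correct. Note that the paper does not actually prove this lemma --- it is introduced as ``a simple observation'' and used without further justification --- so your case analysis supplies details the authors omit rather than paralleling an argument of theirs. The two points that genuinely need checking are exactly the ones you isolate: in Case 1, that the step following $F$ survives the deletion and still forces $F$ to be the maximal Dyck factor (your maximality argument showing this step is a down step going below $F$'s starting height also guarantees it is not among the deleted steps, since the deleted peak begins with an up step); and in Case 2, that the maximal Dyck factor at the valley changes from $F = G\,1\,0^{r}$ to $G\,0^{r-m}$, which your height bookkeeping ($r\ge m$ because $G$ stays weakly above $F$'s starting height) justifies. One cosmetic remark: your analysis shows that $v'=w'$ occurs precisely when the up step of the valley is the \emph{last up step} of $v$, i.e.\ when $F=10^m$ is the last peak; this is slightly stronger than the lemma's parenthetical ``the cover relation takes place in the last valley of $v$'' --- for instance, if the last ascent of $v$ has length $2$, the cover at the last valley still yields $v'\cov w'$ rather than $v'=w'$. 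This imprecision is harmless, since only the final ``consequently'' assertion is used later in the paper, and your chain argument establishes it.
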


  Now let $i \in \llbracket 1, m+1\rrbracket$, and consider an interval $[v,w]$ in $\e K_i$. By definition, this means that $v=D(v_1, \ldots, v_{i-1}, 10^m, \vide, \ldots, \vide)$. Let us define $v'$ and $w'$ by deleting the last peak of $v$ and $w$, respectively. By Lemma~\ref{peak-delete}, we obtain an interval $[v',w']$. Moreover, $v'=D(v_1, \ldots, v_{i-1},  \vide, \ldots, \vide)$, so that $[v',w']$ is  in $\e{J}_{i-1}$. To recover $v$ from $v'$, it suffices to insert the factor $10^m$ in the final descent of $v'$, at height $m+1-i$ (by this, we mean that the final up step of $v$ starts at  height $m+1-i$). Analogously, we  recover $w$ from $w'$ by inserting a peak  in the final descent of $w'$. Note that its insertion  height $h$ is at least $m+1-i$ (so that $w$ is above $v$) and at most $d(w')$. Let us denote $\psi([v,w]):=( [v',w'], h)$.

For instance, take $m=1$ and $[v,w] = [11010010,11011000]$. We have $v= D(1010,10)$ so that $[v,w] \in \e{K}_2$. Removing the final peaks of $v$ and $w$ gives $v'= 110100= w'$. In particular, $[v',w']$ is trivially an interval. We recover $v$ from $v'$ by inserting a peak $10$ in~$v'$ at height $0$, and we recover $w$ from $w'$ by inserting $10$ at height $2$. Hence $\psi([v,w]):=( [110100,110100], 2)$.

The converse construction is illustrated in Figure~\ref{fig:dec-Cj}.  Take $[v',w']$ in $\e{J}_{i-1}$, and write $v'=D(v_1, \ldots, v_{i-1},  \vide, \ldots, \vide)$. 
Form the path $v:=D(v_1, \ldots, v_{i-1},  10^m,\vide, \ldots, \vide)$. Choose $h \in \llbracket m+1-i, d(w')\rrbracket$, and let $w$ be obtained by inserting $10^m$ in the final descent of $w'$, at height $h$. Let us also introduce an intermediate path $w_0$, obtained by inserting a peak at height $m+1-i$ in the final descent of $w'$. Let us now prove that $v\le w_0 \le w$. Let us choose  a sequence of cover relations from $v'$ to $w'$. Since they take place at valleys of height $>m+1-i$ (there is no lower valley in~$v'$), performing cover relations at the same places in $v$ gives a sequence of cover relations from $v$ to $w_0$; see Figure~\ref{fig:dec-Cj}(2). Now starting from $w_0$, we perform  a sequence of cover relations taking place systematically in the last  valley, until the final peak starts at height $h\le d(w')$. These cover relations only move the last peak up; see Figure~\ref{fig:dec-Cj}(3). 
Thus $v\le w$, and the interval $[v,w]$ belongs to $\e K_i$ since $v=D(v_1, \ldots, v_{i-1},  10^m,\vide, \ldots, \vide)$. Moreover, $\psi([v,w])=( [v',w'], h)$.
     
\begin{figure}[htp]
  \centering
  	\scalebox{0.9}{\input{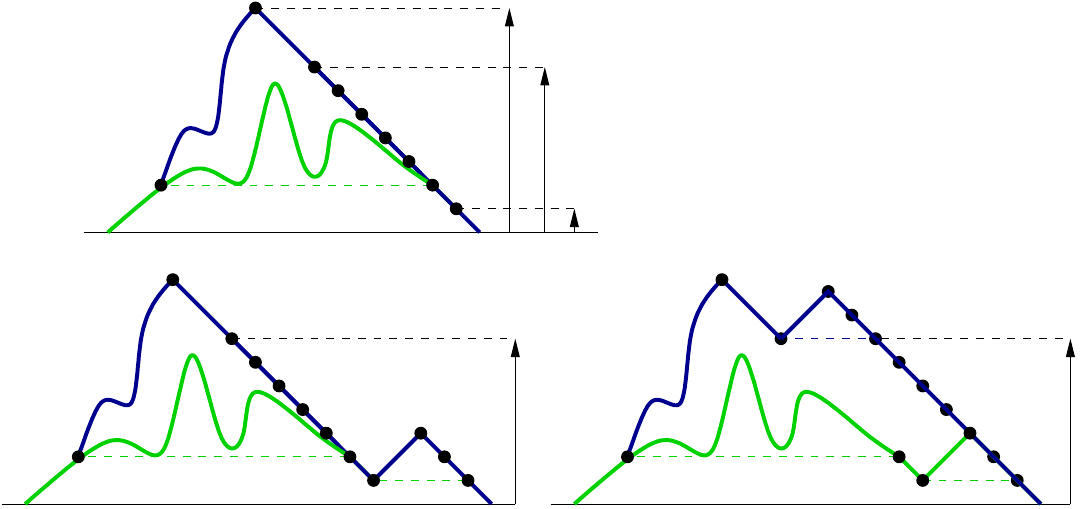_t}}
            \caption{From an interval $[v',w']$ in $\e{J}_{i-1}$ and a height $h$ in $\llbracket m+1-i, d(w')\rrbracket$ to an interval $[v,w]$ in $\e K_i$. Here, $m=2$, $i=2$ and $h=7$.}
          \label{fig:dec-Cj} 
\end{figure}
 
We have thus described a bijection $\psi$ between $\e K_i$ and the set of pairs $([v',w'],h)$ such that $[v',w']\in \e{J}_{i-1}$ and $ h \in \llbracket m+1-i, d(w')\rrbracket$.
 Moreover, if $\psi([v,w])=( [v',w'], h)$, then $|v|=1+|v'|$ and $d(w)=m+h$.
In terms of \gfs, this gives
\begin{equation}
  \label{C_de_B} 
  \s{K}_i = \sum_{[v',w'] \in \e{J}_{i-1}} t^{1+|v'|}\sum_{h=m+1-i}^{d(w')} x^{m+h}
    = x^{m} t  \frac{x\sJ_{i-1}-x^{m+1-i}\sJ_{i-1}(1)}{x-1}.
\end{equation}

\subsection{Proof of Proposition~\ref{prop:func-eq}}
\label{sec:single-eq}

We now combine the above functional equations into a single equation defining $\sI$. First, we use~\eqref{C_de_B} to rewrite the expression~\eqref{B_de_CA} of $\sJ_i$. For $1\le i\le m+ 1$, we thus obtain
\[
  \sJ_i  =\sJ_{i-1} +  \sI \frac{x\sJ_{i-1} -x^{m+1-i} \sJ_{i-1}(1)}{x-1},
\]
or equivalently,
\begin{equation}
  \label{next_J}
  \frac{\sJ_i}{x^{m-i-1}}= \left( x+x^2 \sI \Delta\right)\left( \frac{\sJ_{i-1}}{x^{m-i}}\right),
\end{equation}
where $\Delta$ is the divided difference operator~\eqref{Delta-def}. Using $\sJ_0=x^m t$, this can be solved by induction on $i$ as
\[ 
  \frac{\sJ_i}{x^{m-i-1}}= \left( x+x^2 \sI \Delta\right)^{(i)}(xt)
  = t\left( x+x^2 \sI \Delta\right)^{(i+1)}(1).
\] 
Given that $\s I=\s J_{m+1}$, this gives the equation  of Proposition~\ref{prop:func-eq}.
\qed

\section{Solution of the functional equation}
\label{sec:sol}

In this section, we solve the functional equation defining the series $\sI$  in Proposition~\ref{prop:func-eq}. For any $m$, this is an equation in one ``catalytic'' variable $x$, in the terminology of~\cite{mbm-jehanne}. In particular, it follows from the latter reference that $\sI$ is an \emm algebraic, series. That is, it satisfies a non-trivial polynomial equation with coefficients in $\qs[x,t]$.
Moreover, one can also use the tools developed in~\cite{mbm-jehanne}, and very recently in~\cite{BNS}, to construct an explicit algebraic equation satisfied by $\sI$, for small values of $m$. However, the difficulty here is to determine $\sI$ for an \emm arbitrary, value of $m$.

The functional equation satisfied by $\sI$  is reminiscent of the equation satisfied by a bivariate \gf\ $\s T$ of ordinary Tamari intervals in $\eD_{m,n}$, $n\ge 0$, which was established in~\cite[Prop.~8]{mbm-fusy-preville}, and reads:
\beq\label{eq-ord-contacts}
  \s T =x + xt \left( \s T \Delta \right)^{(m+1)}(x).
  \eeq
  It is also reminiscent of an equation derived in~\cite[Thm.~4.1]{fang-these} for planar $(m+1)$-constellations, which reads:
  \beq\label{eq:const}
    \s C = 1 +xt (\s C + \Delta)^{(m+1)}(1).
  \eeq
In the latter equation, $t$ records the number of polygons and $x$ the degree of the white root face, divided by $(m+1)$. For $m=2$, the solution to this equation starts
  \[
       \sC=1+tx + t^2\left( 3x^2 + 3 x\right)+ \LandauO(t^3),
  \]
(see Figure~\ref{fig:const})  while the series counting greedy $m$-Tamari intervals reads
  \[
  \sI=  tx^2+ t^2\left( 3x^2+2x+1\right)+\LandauO(t^3) .
  \]
  Hence, even though we will prove that $1+\sI(1)=\sC(1)$ (for any $m$), the catalytic parameters do not match.
  We believe that the catalytic parameter used for constellations corresponds to the length of the first ascent {of the maximal element} in greedy intervals, and refer to Section~\ref{sec:conj} for a much refined conjecture.

  Our approach to solve the equation of  Proposition~\ref{prop:func-eq} is similar to the one used in~\cite{mbm-fusy-preville}: by examination of the solution for small values of $m$, we guess a general parametric form of the solution, valid for any $m$, and then check that this guess satisfies the functional equation. More precisely, we guess the value of all series $\sJ_i$, for $1\le i \le m+1$, and prove that these values satisfy the system~\eqref{next_J}.

\medskip
We  introduce a rational parametrization of $t$ and $x$ by two formal power series in $t$ denoted~$\sZ$ and $\sU$. 
The series $\sZ$  has integer coefficients, while $\sU$  has coefficients in $\qs[x]$. The series $\sZ$ is the unique \fps\ in $t$ with constant term $0$ such that
\[ 
  t=  \sZ  {(1-\mp \sZ )^{m}},
  \] 
  where we denote $\mp:=m+1$ to avoid having too many parentheses 
  around, and $\sU$ is the unique \fps\ in $t$ such that
\beq\label{x-param}
  x= \frac \sU  {1-\mp \sZ } \left(1- \sZ \frac{\sU ^{m+1}-1}{\sU -1}\right).
  \eeq
 We have 
\[
\sZ=t + \LandauO(t^2) \qquad  \text{and} \qquad  \sU =x+ xt \left( \frac{x^{m+1}-1}{x-1} -m^+\right) + \LandauO(t^2).
\]
Note also that $\sU(1)=1$. We have found a  rational expression of $\s I$, and in fact of all series $\s J_i$ with $0\le i \le {m+1}$, in terms of the series $\s Z$ and $\sU$.

\begin{theorem}\label{thm:series}
  The bivariate \gf\  $\sI$ of  intervals in the greedy $m$-Tamari posets, counted by the size and the final descent of the maximal element, is given by:
\beq\label{bF-param}
x^2  \sI =
\frac{\sZ \sU ^{m+2}}{(1-\mp \sZ )^2} \left( 1- \sZ \sum_{e=0}^m \sU ^e (m+1-e)
\right)
=\frac{\sZ \sU ^{m+2}}{1-\mp \sZ }\cdot\frac{x-1}{\sU -1 }
.
\eeq
In particular, the size \gf\ of greedy $m$-Tamari intervals is
\[
  \sI(1)= \frac{\sZ}{(1-\mp \sZ )^2} \left( 1- \binom{m+2}{2}\,\sZ \right).
\]

More generally,  for $0 \leq i \leq m+1$, the bivariate series $\sJ_i$ that counts the intervals of the set $\e J_i$ (see Definition~\ref{def:Ji})  is given by:
 \beq\label{Ji-expr}
   \frac{\sJ_i}{x^{m-i-1}}= \s Z (1 - m^+ \s Z)^{m-i-1}\s \sH_i(\sZ; \sU),
   \eeq
  where     $\sH_i(z;u)\equiv \sH_i(u)$ is a  polynomial in $z$ and $u$ given by:
   \beq\label{H-m+1}
  \s H_{m+1}(u)= u^{m+2} \left( 1- z \sum_{e=0}^m u ^e (m+1-e)  \right),
  \eeq
and for $0\le i \le m$,
   \beq\label{Hi-def}
   \sH_i(u)=
   u^{i+1}   \left( 1+  \sum_{k=1}^{i+1} (-z)^k  \binom{i+1}{k} \sum_{e=0}^{m-i} \binom{e+k-1}{e} u^e 
         + \sum_{k=1}^{i}  (-z)^k  \binom{m+k-i-1}{k-1}   \sum_{e=0}^{i-k} \binom{e+k}{e} u^{m+1-k-e}\right).
     \eeq
\end{theorem}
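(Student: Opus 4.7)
My plan is to prove by induction on $i$ the general formula \eqref{Ji-expr} for every $\sJ_i$, $0 \le i \le m+1$, by checking that the candidate parametric expressions satisfy the recursion \eqref{next_J} together with the initial condition $\sJ_0 = x^m t$. The formula for $\hI$ in \eqref{bF-param} then follows by taking $i = m+1$, and evaluating at $x = 1$ (that is, at $\sU = 1$, since $\sU = 1$ forces $x = 1$ in \eqref{x-param}) yields the univariate size generating function.

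The central preliminary step is a change-of-variable identity linking $x - 1$ and $\sU - 1$. Rearranging \eqref{x-param} and using the factorisation $u^{m+2} - (m+2)u + (m+1) = (u-1)^2 \sum_{e=0}^m (m+1-e)u^e$, I would derive
\[
(x-1)(1-\mp\sZ) \;=\; (\sU-1)\,P(\sU), \qquad P(u) := 1 - \sZ \sum_{e=0}^m (m+1-e)\,u^e.
\]
This identity has two immediate consequences. First, it shows the equivalence of the two expressions for $\hI$ in \eqref{bF-param}, since the first reads $\sZ \sU^{m+2} P(\sU)/(1-\mp\sZ)^2$. Second, for any rational $F$ in $\sZ$ and $\sU$ regular at $\sU = 1$, one has $(F - F|_{\sU=1})/(x-1) = (1-\mp\sZ)(F - F|_{\sU=1})/\bigl((\sU-1)P(\sU)\bigr)$, so the operator $\Delta$ becomes a discrete $\sU$-derivative times a simple factor. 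The base case $i = 0$ then reduces to the defining relation $x(1-\mp\sZ) = \sU(1 - \sZ \sum_{e=0}^m \sU^e)$ of $\sU$.

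For the inductive step, I substitute the parametric forms of $\sJ_{i-1}/x^{m-i}$ and of $\hI$ into the recursion \eqref{next_J} and apply the change-of-variable identity to the $\Delta$ term. After cancelling the common factor $\sZ(1-\mp\sZ)^{m-i-1}$, the whole equation collapses to the single polynomial identity in $\qs[u,z]$:
\[
\sH_i(u) \;=\; u\bigl(1 - z \textstyle\sum_{e=0}^m u^e\bigr)\sH_{i-1}(u) \;+\; \frac{z\, u^{m+2}}{u-1}\bigl(\sH_{i-1}(u) - \sH_{i-1}(1)\bigr),
\]
to be verified for $1 \le i \le m+1$. The final step $i = m+1$ is a boundary specialisation, since the two sums in the generic expression \eqref{Hi-def} merge into the single sum of \eqref{H-m+1}.

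The main obstacle is verifying this polynomial identity. My approach is to expand both sides as polynomials in $z$ and match the coefficient of each $z^k$, using the closed form for $\sH_i$ in \eqref{Hi-def}. The right-hand side splits into three contributions: the shift $u\,\sH_{i-1}$, a convolution $-z \sum_{e=0}^m u^{e+1}\sH_{i-1}$, and a ``tail'' $z u^{m+2}(\sH_{i-1}(u) - \sH_{i-1}(1))/(u-1)$, which for each monomial $u^\ell$ of $\sH_{i-1}$ produces $u^{m+2}(1 + u + \cdots + u^{\ell-1})$. Matching coefficients reduces to routine Chu--Vandermonde-type identities on sums $\sum_j \binom{i}{j}\binom{e+j-1}{e}$. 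The computation is mechanical; the only delicate points are the boundary indices where the two sums in \eqref{Hi-def} meet (around $e = m - i$ and $k = i$), and the transition $i = m \to m+1$, each of which requires a short separate verification.
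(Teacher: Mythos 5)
Your proposal is correct and follows essentially the same route as the paper's proof: guess-and-check against the system~\eqref{next_J} (whose solution is unique, by induction on the $t$-degree, since $\hI$ has no constant term), the change of variables $(x-1)(1-\mp\sZ)=(\sU-1)\bigl(1-\sZ\sum_{e=0}^m(m+1-e)\sU^e\bigr)$ which converts $x+\hI\Delta$ into the operator $u+z\nablam$ up to a power of $1-\mp\sZ$, and the resulting reduction to the polynomial identity~\eqref{syst-H} in $\qs[z,u]$, whose two forms of the right-hand side are equivalent as you note. Just be aware that the coefficient-of-$z^k$ verification you describe as ``mechanical'' is the entire content of Appendix~\ref{sec:app}, where it is organised through the polynomials $R^a_\ell$ of~\eqref{def_R} and Lemma~\ref{nabla_on_R} precisely because the boundary cases you mention (the meeting of the two sums in~\eqref{Hi-def} and the step $i=m\to m+1$) require genuine care.
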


\noindent{\bf Remark.}
We can apply  the Lagrange inversion formula to the series $\sZ$ and the above expression of~$\sI(1)$, and this gives the expression of Theorem~\ref{thm:numbers} for the number of greedy Tamari intervals in $\eD_{m,n}$. The coefficients of the bivariate series $\sI$ do not seem to factor nicely.

\begin{proof}
  Let us denote $\hJ_i := \s J_i / x^{m-i-1}$ for $i=0,\dots,m+1$. Recall that we have  also denoted $\hI = x^2 \sI = \hJ_{m+1}$.
  The functional equations~\eqref{next_J} become, for $1\le i \le m+1$, 
  \begin{equation}
    \label{next_hJ}
    \hJ_{i}= \left( x+ \hI \Delta\right) \hJ_{i-1}.
  \end{equation}
  Together with the initial condition $\hJ_0 = xt$, and the fact that the series  $\hJ_i$ have no constant term in $t$ (as they count intervals of positive size), these equations define each  $\hJ_i$ uniquely as a formal power series in $t$. Indeed, the coefficient of
  $t^n$ in~$\hJ_i$ is a polynomial in $x$ that can be computed by a double
  induction on $n$ and $i$.  It is clear on~\eqref{bF-param} and~\eqref{Ji-expr} that the claimed values of $\hI$ and $\hJ_{i}$ have no constant term in $t$, and moreover one readily checks that the right-hand side of~\eqref{Ji-expr} reduces to $xt$ when $i=0$, as expected. Thus it suffices to prove that the series
  $\hI$ and $\hJ_i$ of Theorem~\ref{thm:series}
  satisfy all the equations~\eqref{next_hJ}. We will see that these equations simplify nicely once rewritten in terms of  $\sZ$ and $\sU$.

\smallskip

Let $\sH(u)$ be an arbitrary polynomial in $u$, and recall that the series $\sU$ defined by~\eqref{x-param} equals~$1$ when $x=1$. Hence, if $\hI$ is given by the right-hand side of~\eqref{bF-param}, we have
\[ 
  \left( x + \hI \Delta \right)\left( \sH(\sU)\right)= \frac \sU{1-\mp \sZ}
\left(  \sH(\sU) +\sZ\,  \frac{\s H(\sU) - \sU^{m+1} \s H(1)}{\sU-1}\right).
\]
Hence the series $\hJ_i=\sJ_i/x^{m-i-1}$ given by~\eqref{Ji-expr} satisfy the system~\eqref{next_hJ} if and only if, for $1\le i \le m+1$,
\[
    \s H_i (\sU) =  \sU
\left(  \sH_{i-1}(\sU) +\sZ\,  \frac{\s H_{i-1}(\sU) - \sU^{m+1} \s H_{i-1}(1)}{\sU-1}\right).
\]
  This system holds with the series $\sH_i(z;u)$ evaluated at $(\sZ, \sU)$ if it holds with indeterminates $(z,u)$, that is, if for $1\le i \le m+1$,
  \beq\label{syst-H}
  \s H_i (u)= u \sH_{i-1}(u) + z u\, \frac {\s H_{i-1}(u) - u^{m+1} \s H_{i-1}(1)}{u-1}.
\eeq
In short,
\[ 
\s H_i =\left( u+z\nablam\right)\left(\sH_{i-1}(u)\right),
\]
where the  operator  $\nablam$ is defined by
\begin{equation} 
  \label{def_nabla}
  \nablam \s H = u \frac{\s H - u^{m+1} \s H(1)}{u-1}.
\end{equation}
This is now a  polynomial identity, which we prove in Appendix~\ref{sec:app} using basic binomial identities.
\end{proof}

 \section{A new solution for ordinary $m$-Tamari intervals}\label{sec:sol-ord}

 In this section, we show how to adapt the decomposition of greedy $m$-Tamari intervals used in Section~\ref{sec:dec} to count \emm ordinary, $m$-Tamari intervals. We thus obtain a second proof of the result of~\cite{mbm-fusy-preville}, giving the number of such intervals in $\mathcal D_{m,n}$ in the form~\eqref{ordinary}. Moreover, we refine this result by recording the length of the final descent in the upper path of the interval (Theorem~\ref{thm:series-ord}), while the result of~\cite{mbm-fusy-preville} was recording instead the number of contacts of the lower path (and the length of the first ascent of the upper path).

 The key difference with what has been done in Section~\ref{sec:dec}
 for greedy intervals is that we now only consider factorisations of $m$-Dyck paths $v$ of the form $v_1 * v_2$ such that $v_2$ is \emm prime,, that is, has no contact (recall that the endpoints do not count as contacts). The proofs are very close to the greedy case, and we will be a bit more sketchy in this section.  We will use the following counterpart of Proposition~\ref{prop:monoid_words} and Lemma~\ref{lem:first_factor}.

 \begin{lemma}\label{lem:fact-ord}
   Let $v=D(v_1, \ldots, v_{j-1}, v_j, \vide, \ldots, \vide)\in \mathcal D_{m,n}$, with $v_j \not = \vide$. Then $v$ can be written in a unique way as $v' * v''$, where $v'$ is of the form $D(v'_1, \ldots, v'_{j-1}, v'_j 10^m, \vide, \ldots, \vide)$ and $v''$ is prime.
 \end{lemma}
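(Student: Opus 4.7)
The plan is to make the product $v' * v''$ completely explicit when $v'$ has the prescribed form, and then to reduce the statement to the classical prime factorisation of Dyck paths.

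First I would locate the rightmost peak of $v' = D(v'_1, \ldots, v'_{j-1}, v'_j 10^m, \vide, \ldots, \vide)$. Using the expansion $D(w_1, \ldots, w_{m+1}) = 1(w_1 0)\cdots(w_m 0) w_{m+1}$, the word $v'$ ends with the factor $v'_j 10^m$ followed only by $0$'s (the closing zeros of the empty trailing slots, of which there are none when $j = m+1$). In particular, the $1$ inside this trailing $10^m$ is the rightmost up step of $v'$, and it is immediately followed by $m$ down steps, so it forms the rightmost peak of $v'$. By definition of the product $*$, replacing this peak by $v''$ therefore yields
\[
 v' * v'' \;=\; D(v'_1, \ldots, v'_{j-1}, v'_j v'', \vide, \ldots, \vide).
\]

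Requiring $v' * v'' = v = D(v_1, \ldots, v_{j-1}, v_j, \vide, \ldots, \vide)$ is then equivalent, by uniqueness of the $D$-decomposition, to $v'_i = v_i$ for $i < j$ together with $v_j = v'_j v''$ as a concatenation of Dyck paths, where $v''$ is nonempty and prime and $v'_j$ is any (possibly empty) Dyck path. For this I would invoke the well-known unique factorisation of a nonempty Dyck path into its prime ``arch'' factors (obtained by cutting at every contact): write $v_j = p_1 p_2 \cdots p_k$ with each $p_\ell$ a prime Dyck path and $k \ge 1$, and set $v'' := p_k$ and $v'_j := p_1 \cdots p_{k-1}$ (with $v'_j = \vide$ when $k=1$). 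This establishes existence. For uniqueness, any suffix of $v_j$ that is itself a Dyck path must begin at a contact of $v_j$ (or at its starting point), hence has the form $p_\ell \cdots p_k$ for some $\ell$; among these, only the choice $\ell = k$ produces a prime path, forcing $v'' = p_k$ and $v'_j = p_1 \cdots p_{k-1}$.

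I do not foresee a serious obstacle: the bulk of the argument is the one-line identification of the rightmost peak of $v'$, after which everything reduces to the prime factorisation of ordinary Dyck paths. The mildly delicate points are just bookkeeping: making sure $v'_j \, 10^m$ is an honest Dyck path (it is, as a concatenation of Dyck paths, with $\vide$ allowed), and handling the boundary case $j = m+1$ in which there are no trailing empty slots --- the rightmost-peak argument goes through unchanged because $v'_{m+1} 10^m$ still ends with $m$ descending steps and no later up step.
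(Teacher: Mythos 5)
Your proof is correct and follows essentially the same route as the paper's: both reduce the statement to writing $v_j = v'_j v''$ with $v''$ the unique prime Dyck path that is a suffix of $v_j$, after observing that replacing the rightmost peak of $v'$ (the trailing $10^m$ in slot $j$) by $v''$ concatenates $v''$ onto $v'_j$. You simply spell out the details (location of the rightmost peak, matching of the $D$-decomposition, arch factorisation) that the paper leaves implicit.
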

 \begin{proof}
Write $v_j= v'_j 10^m * v''$, where $v''$ is prime. Clearly, there is a unique way of doing this (the path $v''$ is the unique prime Dyck path that is a suffix of $v_j$).    Then the only factorisation of $v$ that satisfies the conditions of the lemma is obtained with $v'_i=v_i$ for $1\le i <j$.
\end{proof}
For instance, when $m=2$, the path $v=110011000000=D(100110000,\vide, \vide)$ factors as $D(100100, \vide, \vide) * 110000$, and $v'':=110000$ is prime.

\medskip

We now have the following counterpart of Proposition~\ref{covering_and_product}.
\begin{prop}
  \label{covering_and_product-ord}
  Let $v = v_1 * v_2$ be a non-empty Dyck path, and assume that $v_2$ is prime. Let $v \cov w$ be an ordinary Tamari cover relation.  Then either $w = w_1 * v_2$ where $w_1$ covers $v_1$, or $w=v_1 * w_2$ where $w_2$ covers $v_2$.
 
  Conversely, every cover relation $v_1 \cov w_1$ gives a
  cover relation $v_1 * v_2 \cov w_1 * v_2$ and every cover relation
  $v_2 \cov w_2$ gives a cover relation $v_1 * v_2 \cov v_1 * w_2$.

 Consequently, for any non-empty Dyck path $v = v_1 * v_2$, with $v_2$ prime, the upper ideal $\{w: v \leq w\}$ is $\{w_1 *w_2: v_1\le w_1 \text{ and } v_2\le w_2\}$.
\end{prop}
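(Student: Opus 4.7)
The plan is to adapt the proof of Proposition~\ref{covering_and_product} to the ordinary Tamari order. The key new ingredient is the hypothesis that $v_2$ is prime, which will be used to prevent the \emph{shortest} Dyck factor following the valley's down step from crossing unexpectedly between $A$ and $v_2$ inside $v_1 * v_2$.

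First, fix the following notation. Write $v_1 = A \cdot 10^m \cdot 0^{d(v_1)-m}$, so that
\[
  v = v_1 * v_2 = A \cdot v_2 \cdot 0^{d(v_1)-m}.
\]
In $v$, the factor $v_2$ starts and ends at height $d(v_1)-m$ and, because it is prime, stays strictly above this height in its interior. The suffix $0^{d(v_1)-m}$ contains no up steps. Hence the down step $d$ of any valley of $v$ lies either in $A$ (possibly as its last step, with the following up step being the first step of $v_2$), or strictly in the interior of $v_2$. The proof will split along this dichotomy.

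In the first case, $d$ is also the down step of a valley of $v_1$. Let $h$ denote the height at the end of $d$, and let $F_1$ (resp.~$F$) be the shortest Dyck factor after $d$ in $v_1$ (resp.~$v$). A short case analysis on where the first return of $v_1$ to height $h$ occurs (inside $A$, at the end of the rightmost peak of $v_1$, or inside its final descent) will show that $F$ is obtained from $F_1$ by replacing the peak $10^m$ by $v_2$ when $F_1$ contains this peak, and $F = F_1$ otherwise. Primality of $v_2$ is crucial when $h = d(v_1)-m$ and $F_1$ ends just after the peak of $v_1$: it forces the first return of $v$ to height $h$ after $d$ to occur only at the end of $v_2$, and never at an interior contact. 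Performing the swap of $d$ with $F$ in $v$ then produces the path $w_1 * v_2$, where $w_1$ is obtained by swapping $d$ with $F_1$ in $v_1$, so that $v_1 \cov w_1$.

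In the second case, let $h''$ be the height of the endpoint of $d$ inside $v_2$, viewed as a standalone Dyck path. Primality of $v_2$ forces $h'' \geq 1$, so $v_2$ returns to height $h''$ within its own interior. The shortest Dyck factor after $d$ in $v$ therefore equals the shortest such factor $F_2$ in $v_2$, and the swap takes place entirely inside $v_2$, giving $w = v_1 * w_2$ with $v_2 \cov w_2$. The converse statements of the proposition follow by running these two cases in reverse; the description of the upper ideal then follows by iteration, once we check that ordinary Tamari covers preserve primality (a short height argument: shifting the swapped Dyck factor upward by $1$ keeps the interior heights strictly positive, and the other heights are unchanged). The main obstacle I expect is the bookkeeping in the first case when $F$ contains a translated copy of $v_2$: one must verify that the $10^m$ now appearing in the new position of $F_1$ is indeed the rightmost peak of $w_1$, so that inserting $v_2$ at this peak recovers exactly~$w$.
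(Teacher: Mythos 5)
Your argument is correct and is essentially the adaptation the paper has in mind: the paper gives no separate proof of this proposition (it only remarks that the proofs in this section closely follow the greedy case), and your case split on whether the valley's down step lies in $A$ or in the interior of $v_2$ mirrors the proof of Proposition~\ref{covering_and_product}, with primality invoked exactly where it is needed, namely to prevent the shortest Dyck factor from terminating at an interior contact of $v_2$. The two verifications you leave flagged (the trichotomy for the location of the first return in $v_1$, and the check that the transported $10^m$ is indeed the rightmost peak of $w_1$) are routine height computations that go through as you expect.
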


This proposition allows us to define the product $[v_1, w_1]* [v_2,w_2]$, again as $[v_1*v_2, w_1 *w_2]$, but now under the assumption that $v_2$ is prime. Note that this implies that $w_2$ is prime too.
 
 \subsection{Recursive description of ordinary intervals}
As before, we only consider (ordinary) intervals of positive size. Let us denote by $\ebI\equiv \ebI_m$ the set of such intervals. For $1\le i \le m+1$, let $\ebJ_i$ be the set of intervals $[v,w]$ such that $v=D(v_1, \ldots, v_{i-1}, v_i, \vide, \ldots, \vide)$. Finally, let $\ebK_i$ be the subset of $\ebJ_i$ consisting of intervals $[v,w]$ such that $v=D(v_1, \ldots, v_{i-1} ,v_i 10^m, \vide, \ldots, \vide)$. The associated \gfs\ are denoted by $\sbI$, $\sbJ_i$ and $\sbK_i$, respectively. Observe that $\ebJ_0$ only contains the unit interval $[10^m, 10^m]$, while $\ebJ_{m+1}$ coincides with $\ebI$.
 
\begin{prop}\label{prop:func-eq-ord}
  The series $\sbJ_i$, for $0 \le i \le m+1$, are given by  $\sbJ_0=x^mt$ and the equations:
\[
  \sbJ _i= \sbJ_{i-1}+\sbJ_m \frac{x \sbJ_i - x^{m+1-i} \sbJ_i(1)}{x-1}
  , \qquad \text{for } 1\le i \le m+1.
 \]
 Recall that $\sbJ_{m+1}$ coincides with the bivariate \gf\ $\sbI$ of ordinary $m$-Tamari intervals.
\end{prop}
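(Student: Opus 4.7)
The plan is to mirror the two-step decomposition of Section~\ref{sec:dec}, with the greedy factorization replaced by the prime factorization of Lemma~\ref{lem:fact-ord} and Proposition~\ref{covering_and_product} replaced by Proposition~\ref{covering_and_product-ord}. I will establish two bijections: one decomposes $\ebJ_i \setminus \ebJ_{i-1}$ as $\ebK_i \times \ebJ_m$, the other describes $\ebK_i$ by deleting the trailing peak and recording an insertion height. Observe that $\sbJ_m$ now plays the role that $\sI$ played in the greedy case, because the second factor in the prime factorization is forced to be a prime path, and the primes are precisely the paths of the form $D(v''_1,\ldots,v''_m,\vide)$, i.e., the elements of $\ebJ_m$. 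Moreover, if $v''$ is prime and $v'' \le w''$ in the ordinary order, then $w''$ lies above $v''$ pointwise, never returns to height $0$ strictly before its end, and is hence also prime, so $[v'',w''] \in \ebJ_m$.

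For the first bijection, let $[v,w] \in \ebJ_i \setminus \ebJ_{i-1}$. By Lemma~\ref{lem:fact-ord}, $v = v' * v''$ with $v''$ prime and $v' \in \ebK_i$. Proposition~\ref{covering_and_product-ord} then produces a matching factorization $w = w' * w''$ with $v' \le w'$ and $v'' \le w''$; conversely, any pair $([v',w'], [v'',w'']) \in \ebK_i \times \ebJ_m$ reassembles into $[v',w'] * [v'',w'']$, which lies in $\ebJ_i \setminus \ebJ_{i-1}$ (the $i$-th slot of $v' * v''$ is non-empty because $v''$ is). The statistics transform as $|w|=|w'|+|w''|-1$ and $d(w)=d(w')+d(w'')-m$, exactly as in the greedy case, yielding
\[ \sbJ_i = \sbJ_{i-1} + \frac{\sbK_i \sbJ_m}{x^m t}. \]

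For the second bijection, given $[v,w] \in \ebK_i$, remove the trailing peak $10^m$ from both $v$ and $w$ and record the height $h$ at which it sat in $w$. The ordinary analog of Lemma~\ref{peak-delete}---each cover relation either takes place at the valley just before the last peak, forcing $v$ and $w$ to coincide after truncation, or descends to a cover relation on the truncated paths---guarantees that $[v',w']$ is still an interval. The crucial contrast with the greedy case is that here $v' \in \ebJ_i$ rather than $\ebJ_{i-1}$, because the $i$-th slot of $v$ is $v_i 10^m$ rather than $10^m$, so deletion merely strips the trailing peak and leaves $v_i$ in place. The admissible heights form $\llbracket m+1-i, d(w')\rrbracket$, giving
\[ \sbK_i = x^m t \cdot \frac{x \sbJ_i - x^{m+1-i} \sbJ_i(1)}{x-1}, \]
and substituting this into the first equation produces the stated recursion (with the base case $\sbJ_0 = x^m t$ immediate from $\ebJ_0 = \{[10^m,10^m]\}$). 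The main obstacle is the inverse of this second bijection: given $[v',w'] \in \ebJ_i$ and $h \in \llbracket m+1-i, d(w')\rrbracket$, one must verify that appending $10^m$ to the $i$-th slot of $v'$ and inserting $10^m$ at height $h$ in the final descent of $w'$ produces a legitimate ordinary interval $[v,w] \in \ebK_i$. Exactly as in the greedy case, this factors through an intermediate path $w_0$ with the peak inserted at the minimal height $m+1-i$: the inequality $v \le w_0$ is lifted from $v' \le w'$ by replaying a chain of cover relations that do not disturb the newly appended peak, and $w_0 \le w$ is obtained by iteratively applying the ordinary cover relation $0 \cdot 10^m \to 10^m \cdot 0$, which moves the inserted peak up by one unit at each step along the final descent.
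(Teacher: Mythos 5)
Your proof is correct and follows essentially the same route as the paper's: the prime factorisation of Lemma~\ref{lem:fact-ord} together with Proposition~\ref{covering_and_product-ord} gives the bijection $\ebJ_i\setminus\ebJ_{i-1}\simeq \ebK_i\times\ebJ_m$, and peak deletion with the recorded insertion height gives $\ebK_i\simeq\{([v',w'],h): [v',w']\in\ebJ_i,\ m+1-i\le h\le d(w')\}$, with the two key deviations from the greedy case (the factor $\sbJ_m$ in place of $\sbI$, and $\ebJ_i$ rather than $\ebJ_{i-1}$ after truncation) correctly identified. You even supply slightly more detail than the paper on the inverse of the second bijection, where the paper simply asserts bijectivity.
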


\begin{proof}
  The proof follows the same steps as in Section~\ref{sec:dec}. First, if $[v,w]\in \ebJ_{i} \setminus \ebJ_{i-1}$, we write $v=v'*v''$ as in Lemma~\ref{lem:fact-ord}. By Proposition~\ref{covering_and_product-ord}, we have $[v,w]=[v',w'] *[v'',w'']$. The map that sends $[v,w]$ to the pair $([v',w'] ,[v'',w''])\in \ebK_i \times \ebJ_m$ is easily seen to be bijective. This is the counterpart of Section~\ref{sec:Ji}, and gives
  \[
    \sbJ _i= \sbJ_{i-1}+ \frac{\sbK_i}{x^m t} \sbJ_m.
  \]
For instance, take $m=1$ and consider the interval $[v,w]=[110100101100,1101 111000    00]$, already considered in the greedy case (Section~\ref{sec:Ji}). The factorisation that we use is now $[v',w']*[v'',w'']$ where $v'= D(1010,1010)$, $v''=1100$, $w'=1101110000$ and $w''= 1100=v''$.
  
  Let us now decompose intervals of $\ebK_i$. First, we check that Lemma~\ref{peak-delete} holds verbatim for ordinary $m$-Tamari intervals.  Now take $[v,w]\in \ebK_i$, with $v= D(v_1, \ldots, v_i10^m, \vide, \ldots, \vide)$.  Define~$v'$ (resp. $w'$) by deleting the last peak of $v$ (resp.~$w$). In particular, $v'= D(v_1, \ldots, v_i, \vide, \ldots, \vide)$, so that the interval $[v',w']$ belongs to $\ebJ_i$. Let $h$ be the height of the starting point of the last 
  up step in $w$. Again, we have the natural bounds $h\in \llbracket m+1-i, d(w')\rrbracket$. The map $\psi$ defined by $\psi([v,w])= ([v',w'], h)$ is easily seen to be a bijection from $\ebK_i$ to $\{ ([v',w'],h) \in \ebJ_i\times \ns : m+1-i\le h \le d(w")\}$, and the equation
 \[
   \sbK_i= x^m t \frac{x \sbJ_i - x^{m+1-i} \sbJ_i(1)}{x-1}
 \]
 is obtained as the counterpart of~\eqref{C_de_B}.  We now combine this equation with the previous one to complete the proof of the proposition.  
\end{proof}

\noindent{\bf Remark.} When $m=1$, it is known that the length of the last descent of the upper path~$w$ in ordinary intervals $[v,w]$ is distributed as the number of contacts in the lower path~$v$, plus one (in fact, the joint distribution is symmetric, see~\cite[Sec.~4]{mbm-fusy-preville}). In terms of \gfs, this means that $\sbI= \s T /x-1$, where $\s T$ is the solution of~\eqref{eq-ord-contacts} when $m=1$. We could thus expect that the functional equation obtained for $\sbI=\sbJ_2$ by elimination of $\sbJ_1$ in the system of Proposition~\ref{prop:func-eq-ord} coincides with the one derived from~\eqref{eq-ord-contacts}. This is however not the case: the former equation involves the series $\sbJ(1)$ and $\sbJ'(1)$, while the latter only involves~$\sbJ(1)$. Mixing both equations provides a relation between these two series.
 
\subsection{Solution}

As before, we  introduce a rational parametrization of $t$ and $x$ by two formal power series in $t$ denoted~$\sbZ$ and $\sbU$.
The series $\sbZ$  has integer coefficients, while $\sbU$  has coefficients in $\qs[x]$. The series $\sbZ$ is the unique \fps\ in $t$ with constant term $0$ such that
\[ 
  t=  \sbZ  \left(1-\sbZ\right )^{m^2+2m},
  \] 
  and $\sbU$ is the unique \fps\ in $t$ such that
\[ 
  x= \frac \sbU  {(1-\sbZ)^{m+2} } \left(1- \sbZ \frac{\sbU ^{m+1}-1}{\sbU -1}\right).
  \] 
 We have 
\[
\sbZ=t + \LandauO(t^2) \qquad  \text{and} \qquad  \sbU =x+ xt \left( \frac{x^{m+1}-1}{x-1} -(m+2)\right) + \LandauO(t^2).
\]
Note that $\sbU(1)=1-\sbZ$.

\begin{theorem}\label{thm:series-ord}
  The bivariate \gf\  $\sbI$ of  intervals in the ordinary $m$-Tamari lattices, counted by the size and the final descent of the maximal element, is given by:
\[ 
x^2  \sbI =
\frac{\sbZ\, \sbU ^{m+2}}{(1-\sbZ )^{2m+4}} \left( 1- \sbZ \sum_{e=0}^m \sbU ^e (m+1-e)\right).
\] 
In particular, the size \gf\ of ordinary $m$-Tamari intervals satisfies
\[
1+  \sbI(1)= \frac{1-(m+1)\sbZ}{(1-\sbZ)^{m+2}},
\]
as already established in~\cite{mbm-fusy-preville}.
We then recover~\eqref{ordinary} using the Lagrange inversion formula.

Moreover, we have
\beq\label{Jm-ord}
  \sbJ_m= \frac{x-1}x  \cdot \frac{\sbZ\, \sbU^{m+1}}{\sbU-1+\sbZ},
\eeq
More generally,  for $0 \leq i \leq m+1$, the bivariate series $\sbJ_i$  is given by:
 \beq\label{Ji-expr-ord}
    \frac{\sbJ_i}{x^{m-i-1}}= \sbZ \left(1 -  \sbZ\right)^{(m+2)(m-i-1)}\sH_i\left(\sbZ; \sbU\right),
 \eeq
where the polynomials    $\sH_i(z;u)\equiv \sH_i(u)$ are defined in~\eqref{H-m+1}-\eqref{Hi-def}.
\end{theorem}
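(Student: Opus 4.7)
The plan is to mirror the approach of Section~\ref{sec:sol}. Normalise by setting $\hat{\sbJ}_i := \sbJ_i / x^{m-i-1}$, so that $\hat{\sbI} := x^2 \sbI = \hat{\sbJ}_{m+1}$ and $\hat{\sbJ}_m = x\,\sbJ_m$. With this change of variables, the system of Proposition~\ref{prop:func-eq-ord} takes the form
\begin{equation*}
\hat{\sbJ}_0 = xt, \qquad \hat{\sbJ}_i = x\, \hat{\sbJ}_{i-1} + \hat{\sbJ}_m \cdot \frac{\hat{\sbJ}_i - x\, \hat{\sbJ}_i(1)}{x-1} \quad (1 \le i \le m+1).
\end{equation*}
Since $\hat{\sbJ}_m$ vanishes modulo $t$, a coefficient-by-coefficient induction in $t$ (analogous to the greedy case) shows that this system, together with the requirement that each $\hat{\sbJ}_i$ has no constant term in $t$, uniquely determines the series $\hat{\sbJ}_i$ in $\qs[x][[t]]$. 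It therefore suffices to verify that the proposed expressions of the theorem satisfy this system.

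I then substitute the claimed form $\hat{\sbJ}_i = \sbZ\,(1-\sbZ)^{(m+2)(m-i-1)} H_i(\sbZ;\sbU)$. Two elementary identities make the calculation tractable: the definition of $\sbU$ immediately yields $x\,(1-\sbZ)^{m+2} = H_0(\sbZ;\sbU)$, and a direct geometric-sum computation gives $H_0(1-z) = (1-z)^{m+2}$. Combined with $\sbU|_{x=1} = 1-\sbZ$, these turn $\hat{\sbJ}_i(1)$ into $\sbZ\,(1-\sbZ)^{(m+2)(m-i-1)} H_i(1-\sbZ)$, reduce the initial condition to the very definition of $H_0$, and, after cancelling the common factor $\sbZ\,(1-\sbZ)^{(m+2)(m-i-2)}$, collapse the recursion to the bivariate polynomial identity
\begin{equation*}
(1-z)^{m+2}(u-1+z)\,[H_i(u) - H_0(u) H_{i-1}(u)] = z\, u^{m+1}\,[(1-z)^{m+2} H_i(u) - H_0(u) H_i(1-z)],
\end{equation*}
to be verified for $1 \le i \le m+1$.

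The main obstacle is proving this polynomial identity. I plan to first establish the auxiliary identity
\begin{equation*}
(u-1+z)\, H_m(u) = \bigl(H_0(u) - H_0(1-z)\bigr)\, u^{m+1},
\end{equation*}
which is an algebraic restatement of~\eqref{Jm-ord} and can be checked directly from the explicit formula~\eqref{Hi-def} by a telescoping manipulation of binomial coefficients in the spirit of Appendix~\ref{sec:app}. Substituting it into the right-hand side of the main identity cancels a common factor $(H_0(u) - H_0(1-z))$ and leaves a cleaner identity relating $H_i(u) - H_0(u) H_{i-1}(u)$ to $(1-z)^{m+2} H_i(u) - H_0(u) H_i(1-z)$. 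I expect this reduced identity to follow either by combining the greedy identity of Appendix~\ref{sec:app} (which concerns the same polynomials $H_i$) with the auxiliary identity above, or by a direct binomial computation in the same style as the appendix; a small check at $m=1, i=1,2$ confirms the target identity and suggests the right pattern. The principal difficulty is that the ordinary identity is bilinear in the $H_i$'s, whereas the greedy one is linear, so some additional algebraic reorganisation is required.

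Finally, the individual statements follow by specialisation. Setting $i=m+1$ in~\eqref{Ji-expr-ord} gives the announced formula for $x^2\sbI$; setting $i=m$ and invoking the auxiliary identity gives the compact form~\eqref{Jm-ord} of $\sbJ_m$; and specialising $x=1$ (so $\sbU \to 1-\sbZ$) in the formula for $\sbI$, followed by a short telescoping of $\sum_{e=0}^m (1-\sbZ)^e (m+1-e)$, produces $1+\sbI(1) = (1-(m+1)\sbZ)/(1-\sbZ)^{m+2}$. The explicit count~\eqref{ordinary} then follows by Lagrange inversion applied to the equation $t = \sbZ\,(1-\sbZ)^{m^2+2m}$.
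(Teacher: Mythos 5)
Your setup is sound and coincides with the paper's strategy: you invoke uniqueness of the solution of the system of Proposition~\ref{prop:func-eq-ord} (the induction on the coefficient of $t^n$ works because both $\hat{\sbJ}_m$ and the divided-difference factor vanish at order $0$ in $t$), you correctly observe that $x(1-\sbZ)^{m+2}=\sH_0(\sbZ;\sbU)$ and $\sH_0(1-z)=(1-z)^{m+2}$, and the bivariate polynomial identity you arrive at,
\[
(1-z)^{m+2}(u-1+z)\bigl[\sH_i(u)-\sH_0(u)\sH_{i-1}(u)\bigr]=z\,u^{m+1}\bigl[(1-z)^{m+2}\sH_i(u)-\sH_0(u)\sH_i(1-z)\bigr],
\]
is exactly the paper's~\eqref{Hi-new-eq} after clearing denominators. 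The difficulty is that your argument stops precisely where the real work begins: ``I expect this reduced identity to follow either by combining the greedy identity \dots or by a direct binomial computation'' is not a proof, and a check at $m=1$, $i=1,2$ does not establish the identity for all $m$ and all $i$. Moreover the intermediate step you propose --- substituting the auxiliary identity $(u-1+z)\sH_m(u)=(\sH_0(u)-\sH_0(1-z))u^{m+1}$ into the right-hand side to ``cancel a common factor'' --- is itself only needed to pass from the form of the system involving $\sH_m$ to the displayed identity (it cancels the factor $x-1=(\sH_0(u)-(1-z)^{m+2})/(1-z)^{m+2}$ in the denominator); it does not by itself bring you closer to proving the displayed identity, and you do not prove the auxiliary identity either.

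The way the paper closes this gap is short and you nearly name it: the greedy identity~\eqref{syst-H}, already established in Appendix~\ref{sec:app}, suffices on its own. Specialising~\eqref{syst-H} at $u=1-z$ gives $\sH_i(1-z)=(1-z)^{m+2}\sH_{i-1}(1)$; injecting this into~\eqref{Hi-new-eq} and then replacing $\sH_i(u)$ by its expression~\eqref{syst-H} reduces everything to an identity that is \emph{linear} in the two quantities $\sH_{i-1}(u)$ and $\sH_{i-1}(1)$ (your worry about bilinearity disappears because $\sH_0(u)$ is just $\tilde x(1-z)^{m+2}$, i.e.\ the variable $x$ in disguise), and a two-line computation with the explicit $\tilde x$ finishes the verification. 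Your auxiliary identity is then not needed for the proof of~\eqref{Ji-expr-ord}; it is only the consistency check of Remark~1 relating~\eqref{Jm-ord} to the case $i=m$ of~\eqref{Ji-expr-ord}. So: right reduction, right candidate tool, but the decisive verification is missing and must be carried out as above.
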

\noindent{\bf Remarks}\\
{\bf 1.} One readily checks that the  expression~\eqref{Jm-ord}  of $\sbJ_m$ equals the right-hand side of~\eqref{Ji-expr-ord} when~$i=m$.
\\
{\bf 2.}
It is striking
that the expressions of $\sbJ_i$ (for ordinary intervals) and $\sJ_i$ (for greedy intervals, see~\eqref{Ji-expr}) are so close. In fact, our proof of the above theorem uses the solution of the greedy case.

\begin{proof}
  The system of Proposition~\ref{prop:func-eq-ord}, the initial condition $\sbJ_0=tx^m$, plus the fact that the series~$\sbJ_i$ have no constant term in $t$, characterize these series as formal power series in $t$. The claimed values of the $\sbJ_i$'s have no constant term, and it is easy to see that the initial condition holds as well. It thus suffices to prove that they satisfy the system. 

  We argue as in the proof of Theorem~\ref{thm:series}, but this time we have $\sbU=1-\sbZ$ when $x=1$. The system that the series $\sH_i$ must now satisfy (the counterpart of~\eqref{syst-H}) reads:
  \beq\label{Hi-new-eq}
\sH_i(u)= \tilde x(1-z)^{m+2} \sH_{i-1}(u)+ z u^{m+1} \frac{\sH_i(u) -\tilde x \sH_i(1-z)}{u-1+z}  ,
\eeq
where we denote
  \[
    \tilde x := \frac u{(1-z)^{m+2}}   \left(1- z \frac{u ^{m+1}-1}{u -1}\right).
  \]
By   specializing~\eqref{syst-H}  to $u=1-z$, we find that
  \[
    \sH_i(1-z)= (1-z)^{m+2} \sH_{i-1}(1).
  \]
We now inject in~\eqref{Hi-new-eq} first this expression of $\sH_i(1-z)$, then the expression~\eqref{syst-H} of $\sH_i(u)$, and finally  the above expression of $\tilde x$. This proves that~\eqref{Hi-new-eq} indeed holds. Hence the claimed values of the series $\sbJ_i$ are correct.  
\end{proof}

\section{Comments and perspectives}
\label{sec:final}
\subsection{Bijections?}
\label{sec:conj} 
Obviously, this paper raises the quest for a bijective proof of Theorem~\ref{thm:numbers}. It may be possible to find inspiration in some ideas used in the bijections found by Fang for related objects~\cite{fang-bipartite,fang-bridgeless,fang-preville-enumeration,fang-trinity}. Another very suggestive guideline is the following conjecture.

\begin{conjecture}\label{conj}
  The number of  greedy $m$-Tamari intervals 
  in which the maximal element has~$n_i$ ascents of length $i$, for $i\ge 1$, including a first ascent of length $\ell$, is the number of $(m+1)$-constellations having  $n_i$ white faces of degree $(m+1)i$, for $i\ge 1$, including a white root face of degree $(m+1)\ell$.
\end{conjecture}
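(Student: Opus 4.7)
The conjecture simultaneously refines Theorem~\ref{thm:numbers} by tracking all ascent lengths of the upper path (to match white face degrees) and by singling out the first ascent (to match the root face degree). Accordingly, I see two complementary routes.

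\textbf{A refined functional-equation approach.} Introduce a multivariate refinement $\widetilde{\sI}(t, x; y_1, y_2, \ldots)$ of $\sI$ in which the indeterminate $y_i$ marks each ascent of length $i$ in the upper path $w$, while the catalytic variable $x$ marks the length of the first ascent. Derive a functional equation for $\widetilde{\sI}$ by adapting the arguments of Section~\ref{sec:dec}, derive a matching refinement of Fang's equation~\eqref{eq:const}, and prove the two series coincide, ideally via a common rational parametrisation in the spirit of Theorem~\ref{thm:series}. The immediate technical obstacle is that Section~\ref{sec:dec} uses the \emph{final descent} of $w$ as catalytic parameter, whereas the conjecture calls for the \emph{first ascent}. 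A natural remedy is to re-derive the functional equation using a dual monoid decomposition that factorises Dyck paths at the leftmost peak rather than the rightmost; the proofs of Propositions~\ref{prop:monoid_words} and~\ref{covering_and_product} should adapt essentially verbatim by symmetry. For $m=1$, an alternative is to exploit the expected anti-isomorphism with the dexter posets (mentioned at the end of the introduction) to transport the existing machinery onto the statistic of interest.

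\textbf{A bijective approach.} Attempt to match the monoid factorisation of greedy intervals (Proposition~\ref{prop:Im}) with a recursive constellation decomposition, such as the blossom-tree description in~\cite{fang-these} or the mobile approach of~\cite{bouttier-mobiles}, so that the ``first generator'' in the factorisation of an interval corresponds to a controlled local operation (for instance, deletion of an edge incident to the root black face) on the constellation. The small cases of Example~\ref{ex:simple} and Figures~\ref{fig:poset-n=3}--\ref{fig:poset} should guide and constrain such a map, especially in the bipartite case $m=1$ where Fang's existing work~\cite{fang-bipartite} offers a natural starting template. Matching the first-ascent statistic with the root-face degree suggests that the first generator of the interval should control the root face, which fits well with a leftmost-peak variant of the monoid.

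\textbf{Principal difficulty.} The greedy order is defined by a strongly non-local move (swapping the \emph{longest} Dyck factor past a down step), so any bijection must encode this global information locally on the constellation side; I expect this to be the central obstruction to the bijective route. The refined functional-equation approach is probably achievable with routine but heavy calculation, but even if successful it only \emph{proves} the equidistribution without yielding the bijection the conjecture hints at. A hybrid strategy, using the refined generating function identity to constrain the local rules of a conjectural bijection and then promote these rules into an actual map, seems the most promising overall.
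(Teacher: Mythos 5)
The statement you are addressing is presented in the paper as Conjecture~\ref{conj} and is left \emph{open} there: the authors only report having verified it for $m+n\le 10$, and Section~\ref{sec:final} is devoted to exactly the two strategies you outline --- a refined functional equation to be matched against~\eqref{eq:const} (which the paper notes ``would at once prove and refine Theorem~\ref{thm:numbers}''), and a bijection guided by Fang's constructions. Your text is therefore a research plan rather than a proof: no refined functional equation is derived, no bijection is constructed, and no step beyond the paper's own finite verification is actually carried out. As it stands it establishes nothing, and cannot be compared with a proof in the paper because the paper contains none.

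Beyond that global gap, one specific claim in your plan is unsupported and likely false as stated: you assert that a ``dual'' monoid factorising Dyck paths at the \emph{leftmost} peak would satisfy the analogues of Propositions~\ref{prop:monoid_words} and~\ref{covering_and_product} ``essentially verbatim by symmetry''. The greedy cover relation is not left--right symmetric --- it swaps the down step of a valley with the longest Dyck factor \emph{to its right} --- and the paper itself points out that the mirror operation is expected to land in the (anti-isomorphic) dexter posets rather than back in the greedy order. So compatibility of a leftmost-peak product with greedy covers would require a genuinely new argument, and the catalytic variable ``length of the first ascent of $w$'' cannot be obtained by simply reflecting the final-descent machinery of Section~\ref{sec:dec}. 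Until either the refined equation for $\widetilde{\sI}$ is actually derived and shown to coincide with the refinement of~\eqref{eq:const}, or an explicit bijection is exhibited and proved, the conjecture remains exactly as open as in the paper.
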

In the above statement, an \emm ascent, is a maximal sequence of up steps, and its length is the number of steps that it contains. Note that the size of the interval is then $n=\sum i n_i$. We have checked Conjecture~\ref{conj} for $m + n\le 10$. For instance, in Example~\ref{ex:simple}, where $m=n=2$, there are $3$ intervals where the maximal element is $u$ or $v$ and has $n_1=2$ ascents of length $1$,
and $3$ intervals where the maximal element is $w$ and has $n_2=1$ ascent of length~$2$.
Accordingly, we see on  Figure~\ref{fig:const} that we have $3$ constellations with~$2$ white faces of degree $3$, and $3$ constellations with a single white face of degree $6$.

Note that the number of $(m+1)$-constellations with $n_i$ white faces of degree $(m+1)i$ is known to be:
\[
 (m+1)m^{f-1}\frac{(mn)!}{(mn-f+2)!} \ 
\prod_{i \ge 1} \frac{1}{n_i!}{(m+1)i-1 \choose i-1}^{n_i},
\]
where $n=\sum in_i$ is the number of polygons, and $f=\sum n_i$
 the number of white faces~\cite[Thm.~2.3]{mbm-schaeffer-constellations}.

 \medskip
 Another natural question deals specifically with the case $m=1$, for which a bijection has been established between ordinary Tamari intervals of size $n$ and rooted triangulations (with no loop nor multiple edge) having $n+3$ vertices~\cite{BeBo07}. In this case the rooting consists in orienting an edge. Since greedy Tamari intervals are also ordinary intervals, one can ask  which triangulations they correspond to.  Theorem~\ref{thm:numbers} shows that they are  in bijection with $2$-constellations having $n$ polygons, and hence (via the construction of~\cite[Cor.~2.4]{mbm-schaeffer-constellations}) with Eulerian triangulations having $n+2$ vertices, but in which we now allow multiple edges. For instance, when $m=1$ and $n=2$, there are $3$ Tamari intervals, which are all greedy. The corresponding two types of triangulations (first with $5$ vertices and no multiple edge nor loop, then with only $4$ vertices but with a double edge and Eulerian), are shown below. 

 \begin{center}
   \includegraphics[height=20mm]{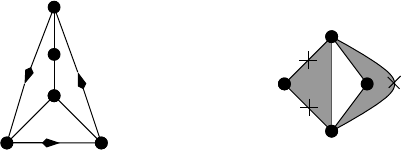}
 \end{center}

 \subsection{Other catalytic parameters?}
 
 For intervals $[v,w]$ in the \emm ordinary, $m$-Tamari lattice, the catalytic parameter considered in~\cite{mbm-fusy-preville} is not the length of the final descent of $w$, but the number of contacts in $v$. Moreover, it is easy to record as well the first ascent of $w$, and one thus discovers
 that the joint distribution of the parameters ``length of the first ascent of $w$'' and ``number of contacts of $v$, plus one'' is symmetric. A bijective proof, and a considerable refinement of this symmetry property, have then  been established in~\cite{chapoton-chatel-pons,pons-involution}.

 It is thus natural to explore, for the greedy order as well, these two statistics.

 \medskip
\noindent{\bf The first ascent of $w$.} As discussed above, the length of the first ascent of $w$ seems to be distributed like the degree of the white root face in $(m+1)$-constellations (divided by $(m+1)$). For instance, when $m=2$, the \gf\ $\widetilde \sI$ of greedy intervals counted by the size (variable $t$) and the first ascent of the upper path (variable $x$) starts
 \beq\label{ser:first-ascent}
 \widetilde \sI=  xt 
 + \left(   3 x^2 + 3 x\right) t^2 + \left(
   12 x^3 + 20 x^2 + 22 x\right) t^3 + \left(
   55 x^4 + 126 x^3 + 195 x^2 + 218 x\right) t^4 +  \LandauO(t^5),
\eeq
and it can be seen from the functional equation~\eqref{eq:const} that holds for $(m+1)$-constellations that this is also the beginning of the expansion of $\sC-1$. In particular, if we could establish that $1+\widetilde \sI$ satisfies the same equation as $\sC$, this would  at once prove and refine Theorem~\ref{thm:numbers}, without having to solve a functional equation as we did in Section~\ref{sec:sol}. Note that the functional equation~\eqref{eq:const} can be refined so as to record the degrees of (non-root) white faces; see~\cite[Thm.~4.1]{fang-these}.

\medskip

\noindent{\bf The number of contacts of $v$.} Note that a path of size $n$ has at most $n-1$ contacts, while the length of final descent can be as large as $mn$. So there is no hope to have an equidistribution of these two parameters. The length of the first ascent, on the other hand, is at most $n$, hence it could be related to the number of contacts. However, for $m=2$ again, the \gf\ counting greedy intervals with respect to the size and contacts of the lower path
starts
\[
  t+
\left(  3 x + 3\right) t^2+
\left(  9 x^2 + 23 x + 22\right) t^3+×\LandauO(t^4).
\]
Comparing with~\eqref{ser:first-ascent} shows that there is no obvious relation with the first ascent. The case  $m=1$ does not behave better.

\subsection{Labelled greedy intervals}
Another natural question deals with \emm labelled, greedy intervals. It was proved in~\cite{mbm-chapuy-preville}, again with a motivation in algebraic combinatorics, that the number of ordinary $m$-Tamari intervals $[v,w]$ of size $n$ in which the up steps of $w$ are labelled with $1, \ldots, n$  in such a way labels increase along any ascent, equals $(m+1)^n (mn+1)^{n-2}$. We have thus explored the corresponding labelled greedy intervals, but the numbers that we obtain do not seem to factor nicely. 

\subsection{A $q$-analogue}

As in the case of ordinary intervals, we can consider a $q$-analogue of our counting problem by recording, for each interval $[v,w]$, the length of the longest chain going from $v$ to $w$ in the greedy poset. It can be proved that the basic functional equation of Proposition~\ref{prop:func-eq} is modified in a very natural form:
\[ 
  x^2\sI =t(x+x^2\sI\Delta_q)^{(m+2)}(1),
\] 
where now
\[
  \Delta_q \sF(x):= \frac{\sF(xq)-\sF(1)}{xq-1},
\]
with obvious notation.


\bibliographystyle{abbrv}
\bibliography{tamar-glouton}

\appendix

\section{Polynomial identities for the operator $\nablam$}
\label{sec:app}

In this section, we establish polynomial identities involving the
operator $\nablam$ defined in~\eqref{def_nabla}, and use them to complete the proof of Theorem~\ref{thm:series}.

In view of the form~\eqref{Ji-expr}-\eqref{Hi-def} of the claimed values of the series $\hJ_i$, we introduce the following polynomials in $u$: for $a$ and $ \ell$ two nonnegative integers, let
\begin{equation}
  \label{def_R}
  R^a_{\ell}(u) = \sum_{e=0}^{a}\binom{e+\ell}{e}u^e.
\end{equation}
We extend this to $a<0$ by setting $R^{a}_\ell(u)=0$ for all $\ell$ (corresponding to the empty sum). Furthermore, considering that $R^a_{\ell}(u)$ is also a polynomial in $\ell$, we set $R^a_{-1}(u)=1$ for $a\ge 0$.

The polynomials $R^a_\ell(u)$ behave nicely with respect to the action of the operator $\nablam$.

\begin{lemma}
  \label{nabla_on_R}
 Let $\ell \geq -1$.  For $a \geq 0$, we have
  \begin{equation*}
    \nablam\left(u^{m+1-a}   R^a_{\ell}(u)\right) = -u^{m+2-a}  R^{a-1}_{\ell+1}(u).
  \end{equation*}
Moreover,  for $b \geq 1$,
  \begin{equation*}
    \nablam\left(u^{m+a+b}  R^a_{\ell}(1/u)\right) = u^{m+a+b}  R^{a+b-2}_{\ell+1}(1/u).
  \end{equation*}
\end{lemma}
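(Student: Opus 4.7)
The plan is to prove both identities by applying $\nablam$ termwise to the monomial expansion of each polynomial, and then reorganizing the result via the hockey stick identity $\sum_{e=0}^{f}\binom{e+\ell}{e}=\binom{f+\ell+1}{f}$. The key preliminary step is to note that $\nablam$ acts on a single monomial $u^k$ in an elementary way: unwinding the definition~\eqref{def_nabla} (and using $\tfrac{u^k-u^{m+1}}{u-1}$ as a finite geometric sum in either direction), one gets
\[
\nablam u^k = \begin{cases} -(u^{k+1} + u^{k+2} + \cdots + u^{m+1}) & \text{if } k \leq m, \\ 0 & \text{if } k = m+1, \\ u^{m+2} + u^{m+3} + \cdots + u^k & \text{if } k \geq m+2. \end{cases}
\]
Since $\nablam$ is linear, both identities reduce to finite sums of such monomials.

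For the first identity, expand $u^{m+1-a} R^a_\ell(u) = \sum_{e=0}^{a}\binom{e+\ell}{e}\, u^{m+1-a+e}$. All exponents lie in $[m+1-a,\, m+1]$, so only the first case above (and the boundary $e=a$ giving $0$) is invoked. Collecting the coefficient of $u^{m+2-a+f}$ for $0 \leq f \leq a-1$, contributions come from $e \in \{0,1,\ldots,f\}$ and total $-\sum_{e=0}^{f}\binom{e+\ell}{e} = -\binom{f+\ell+1}{f}$ by the hockey stick identity. Reassembling yields $-u^{m+2-a}\sum_{f=0}^{a-1}\binom{f+\ell+1}{f}\, u^f = -u^{m+2-a} R^{a-1}_{\ell+1}(u)$, as required. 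The boundary conventions ($R^{-1}_{\ell}=0$, $R^{a}_{-1}=1$) fall out automatically from the convention that empty sums are zero.

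For the second identity, expand $u^{m+a+b} R^a_\ell(1/u) = \sum_{e=0}^{a}\binom{e+\ell}{e}\, u^{m+a+b-e}$. Now every exponent is $\geq m+b \geq m+1$, so the third case of the $\nablam$-formula applies (with the boundary $e=a$, $b=1$ contributing $0$). Collecting the coefficient of $u^{m+2+g}$, the contributing indices are $e \in \{0,1,\ldots,\min(a,\, a+b-2-g)\}$, and invoking the hockey stick identity produces a binomial coefficient that, once reindexed by $e' := a+b-2-g$, matches the coefficient of $u^{m+2+g}$ in $u^{m+a+b} R^{a+b-2}_{\ell+1}(1/u)$.

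The main obstacle, though not a conceptual one, is the bookkeeping in the second identity: the upper summation bound $\min(a,\, a+b-2-g)$ splits into two regimes according to whether $g \leq b-2$ or $g \geq b-1$, and one needs to verify that the resulting two binomial expressions glue together into the single polynomial $R^{a+b-2}_{\ell+1}(1/u)$. Apart from this, both identities are purely elementary binomial manipulations; feeding them into the two summands of $\s H_{i-1}$ in~\eqref{syst-H} (the first of shape $u^{m+1-a} R^a_{\ell}(u)$, the second of shape $u^{m+a+b} R^a_{\ell}(1/u)$) reduces the verification of the recursion $\s H_i = (u + z\nablam)\s H_{i-1}$ to a further round of Pascal-type identities on binomial coefficients indexed by $i$ and $k$.
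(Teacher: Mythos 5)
Your overall strategy is exactly the paper's: expand $R^a_{\ell}$ into monomials, use the elementary three-case action of $\nablam$ on a single power $u^k$ (your formula for $\nablam u^k$ is correct), exchange the order of summation, and finish with the hockey stick identity $\sum_{e=0}^{f}\binom{e+\ell}{e}=\binom{f+\ell+1}{f}$. Your treatment of the first identity is complete and correct, including the boundary cases $a=0$ and $\ell=-1$.

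The gap is in the second identity, precisely at the step you flag as "bookkeeping". You correctly note that the coefficient of $u^{m+2+g}$ receives contributions from $e\in\{0,\dots,\min(a,\,a+b-2-g)\}$, but the asserted gluing of the two regimes into $R^{a+b-2}_{\ell+1}(1/u)$ fails once $b\geq 3$: for $0\le g\le b-3$ the hockey stick yields $\binom{a+\ell+1}{a}$, whereas the target coefficient is $\binom{a+b-1-g+\ell}{a+b-2-g}$ with $a+b-2-g>a$, and these disagree for $\ell\ge 0$. In fact the identity as stated is false for $b\geq 3$: taking $a=0$, $b=3$, $\ell=0$ gives $\nablam(u^{m+3})=u^{m+2}+u^{m+3}$, while $u^{m+3}R^{1}_{1}(1/u)=2u^{m+2}+u^{m+3}$. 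To be fair, this is a defect of the lemma's statement rather than of your method, and the paper's own proof commits the same oversight: after exchanging summations it writes $\sum_{e=0}^{f}\binom{e+\ell}{e}$ where the correct upper bound is $\min(f,a)$, which only coincides with $f$ when $a+b-2\le a$, i.e.\ $b\le 2$. Since the second identity is only ever invoked with $b=2$ (and the first identity covers the remaining uses), nothing downstream is affected; but you should either restrict the second identity to $b\in\{1,2\}$, where the first regime is empty or trivial and your argument closes, or correct its right-hand side for larger $b$. As written, your final gluing step cannot be completed.
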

\begin{proof} 
  Both equalities are proved similarly. The special case $\ell=-1$ is
  first checked separately. Now assume $\ell \geq 0$. Using the 
  definitions~\eqref{def_R} and~\eqref{def_nabla} of $R^a_{\ell}$ and
  $\nablam$, one finds a double sum. The first one involves a variable $e$ and comes from $R^a_{\ell}$, and the second one comes from the expansion of $\nablam(u^*)$. For instance, in order to prove the second identity of the lemma, we start with
  \[
    \nablam\left(u^{m+a+b}R^a_{\ell}(1/u)\right) = \sum_{e=0}^{a}\binom{e+\ell}{e}
    \sum^{m+a+b-e}_{j=m+2} u^j = u^{m+a+b}\sum_{e=0}^{a}\binom{e+\ell}{e}\sum_{f=e}^{a+b-2} (1/u)^f.
  \]
  Exchanging the summations, one
  concludes by a simple identity on binomial coefficients:
  \begin{align*}
     \nablam\left (u^{m+a+b}R_{a,\ell}(1/u)\right)& = u^{m+a+b}\sum_{f=0}^{a+b-2} (1/u)^f \sum_{e=0}^{f}\binom{e+\ell}{e}\\
                                               &= u^{m+a+b}\sum_{f=0}^{a+b-2} (1/u)^f \binom{f+\ell+1}{f}\\
    &=  u^{m+a+b} R^{a+b-2}_{\ell+1}(1/u).
  \end{align*}
\end{proof}

Let us now return to the series $\s H_i$ defined, for $0\le i \le m$, by~\eqref{Hi-def}. We can write them as:
\beq\label{H-i}
\s  H_i =  \sum_{k=0}^{i+1} (-z)^k  \left[
  \binom{i+1}{k}u^{i+1}  R^{m-i}_{k-1}(u)   +  \binom{m+k-i-1}{k-1}   u^{m+i+2-k}R^{i-k}_k(1/u)\right].
\eeq
where by convention $\binom{m-i-1}{-1}=0$. 

  Recall that the proof of Theorem~\ref{thm:series} will be complete once the following proposition is established.

\begin{prop}\label{prop:Hi-eq}
 The above series satisfy $\s H_i =  (u+z \nablam) \s H_{i-1}$  for $1 \leq i \leq m+1$.
 \end{prop}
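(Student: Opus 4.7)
My plan is to prove Proposition~\ref{prop:Hi-eq} by direct substitution: plug the closed form~\eqref{H-i} for $\s H_{i-1}$ into the operator $u + z\nabla_m$ and check that the result matches~\eqref{H-i} with $i$ in place of $i-1$. Formula~\eqref{H-i} expresses $\s H_{i-1}$ as a sum of two kinds of monomial blocks: a \emph{$u$-type} block $u^{i} R^{m-i+1}_{k-1}(u)$ and a \emph{$1/u$-type} block $u^{m+i+1-k} R^{i-1-k}_k(1/u)$. Lemma~\ref{nabla_on_R} is tailored precisely so that $\nabla_m$ sends each block type back into a block of the same type, up to a sign and shifts of the superscripts $a, \ell$, so one can match everything block by block.

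The action of $u$ just shifts the exponent of $u$ by one. To realign the superscripts of $R$ with those in $\s H_i$, I would apply the top-term splittings
\[
R^{a+1}_\ell(u) = R^a_\ell(u) + \tbinom{a+\ell+1}{a+1} u^{a+1}, \qquad R^{a}_\ell(1/u) = R^{a-1}_\ell(1/u) + \tbinom{a+\ell}{a} u^{-a},
\]
which split each shifted block into the correct $\s H_i$-block plus a residual monomial $u^{m+2}$. The action of $z\nabla_m$ is given directly by Lemma~\ref{nabla_on_R}, after reindexing $k \leftarrow k+1$ to absorb the extra factor of $z$ into $(-z)^k$. Summing the two contributions and collecting by type of block and by power of $(-z)^k$, the coefficient of $u^{i+1} R^{m-i}_{k-1}(u)$ becomes $\binom{i}{k}+\binom{i}{k-1}=\binom{i+1}{k}$ by Pascal, and the coefficient of $u^{m+i+2-k} R^{i-k}_k(1/u)$ becomes $\binom{m+k-i}{k-1}-\binom{m+k-i-1}{k-2}=\binom{m+k-i-1}{k-1}$, again by Pascal. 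These are exactly the coefficients required by~\eqref{H-i} for $\s H_i$.

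The splitting step leaves, at each power $(-z)^k$, a residual monomial $u^{m+2}$ whose total coefficient works out to $\binom{i}{k}\binom{m+k-i}{m-i+1}-\binom{m+k-i}{k-1}\binom{i}{i-k}$. This vanishes, because $\binom{m+k-i}{m-i+1}=\binom{m+k-i}{k-1}$ (the two lower indices sum to $m+k-i$) and $\binom{i}{k}=\binom{i}{i-k}$. This establishes the proposition for $1 \le i \le m$, where both $\s H_{i-1}$ and $\s H_i$ are given by~\eqref{Hi-def}.

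The case $i = m+1$ requires a separate check, because $\s H_{m+1}$ is given by the special formula~\eqref{H-m+1}, which does not agree with a naive evaluation of~\eqref{Hi-def} at $i=m+1$. Here I would apply $u+z\nabla_m$ directly to the simplified form of $\s H_m$: the $u$-type block reduces to the pure monomial $u^{m+1}(1-z)^{m+1}$ (since $R^0_{k-1}(u)=1$), which is annihilated by $\nabla_m$; the $1/u$-type blocks transform according to the second identity of Lemma~\ref{nabla_on_R}, producing new $1/u$-type blocks whose successive $k$-terms largely telescope after applying the splitting $R^{m+1-k}_k(1/u)-R^{m-k}_k(1/u)=\binom{m+1}{k} u^{-(m+1-k)}$. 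One then reads off the coefficient of each $u^{m+2+e}$ and matches it to $-z(m+1-e)$ in~\eqref{H-m+1}; only $k=1$ and the monomial $u^{m+2}(1-z)^{m+1}$ contribute to these coefficients. The main obstacle throughout is the bookkeeping of signs, summation ranges and boundary conventions $R^{-1}_\ell=0$ and $R^a_{-1}=1$; the binomial identities themselves are completely elementary.
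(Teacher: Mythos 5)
Your proposal is correct and follows essentially the same route as the paper: both proofs rest on Lemma~\ref{nabla_on_R}, extract the coefficient of $(-z)^k$ from the two block types of~\eqref{H-i}, use Pascal's rule together with the top-term identity $R^{a}_{\ell}-R^{a-1}_{\ell}=\binom{a+\ell}{a}u^{\pm a}$ to realign the superscripts, observe that the residual $u^{m+2}$ monomials cancel via $\binom{m+k-i}{m-i+1}=\binom{m+k-i}{k-1}$, and treat $i=m+1$ separately using $\nablam(u^{m+1})=0$. The only (harmless) imprecision is in the $i=m+1$ case, where the residual monomials $(-z)^k\binom{m+1}{k}u^{m+2}$ for $k\ge 2$ also contribute to the coefficient of $u^{m+2}$, combining with $(1-z)^{m+1}$ to give $1-(m+1)z$.
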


\begin{proof} 
   We prove separately the cases $i\le m$ and $i=m+1$. Let us start with $i\le m$.
   Observe that the sums over $k$ in~\eqref{H-i} can be extended to all values $k\ge 0$: the binomial coefficient $\binom{i+1}{k}$ vanishes when $k>i+1$,
   and the sum $R^{i-k}_k$ is empty as soon as $i>k$.  We form the polynomial $\s H_i -  (u+z \nablam) \s H_{i-1}$ and extract the coefficient of $(-z)^k$, with $0\le k\le i+1$ (this coefficient being obviously zero for larger values of $k$). The coefficient  of $(-z)^k$ reads $c_++c_-$, with
  \[
    c_+=\binom{i+1}{k}u^{i+1}  R^{m-i}_{k-1}(u)- u \binom{i}{k}u^{i}  R^{m-i+1}_{k-1}(u)+\binom{i}{k-1} \nablam\left(u^{i} R^{m-i+1}_{k-2}(u)\right),
  \]
  \begin{multline*}
    c_-=  \binom{m+k-i-1}{k-1}   u^{m+i+2-k}R^{i-k}_k(1/u)-
    u\binom{m+k-i}{k-1}   u^{m+i+1-k}R^{i-k-1}_k(1/u)\\
    + \binom{m+k-i-1}{k-2} \nablam\left(  u^{m+i+2-k}R^{i-k}_{k-1}(1/u)\right),
  \end{multline*}
  where  the binomial coefficients $\binom{a}{b}$ are zero when $b<0$. For $k=0$, the term $c_+$ vanishes, since $R^a_{-1}(u)$ has been defined to be $1$ for $a\ge 0$. The term $c_-$ vanishes as well when $k=0$, thus the polynomial $\s H_i -  (u+z \nablam) \s H_{i-1}$ has no constant term.

  So let us take $k\in \llbracket1, i+1\rrbracket$ and examine the term $c_+$. Using Lemma~\ref{nabla_on_R}, we can reexpress the term involving $\nablam$, and we thus obtain:
  \begin{align*}
    c_+ &= u^{i+1}\left[\binom{i+1}{k}  R^{m-i}_{k-1}(u)-  \binom{i}{k}  R^{m-i+1}_{k-1}(u)-\binom{i}{k-1} R^{m-i}_{k-1}(u)\right]\\
        &= u^{i+1}  \binom{i}{k} \left[ R^{m-i}_{k-1}(u)- R^{m-i+1}_{k-1}(u)\right]
          \hskip 21mm \text{by Pascal's formula,}\\
        &= -u^{i+1}  \binom{i}{k} \binom{m-i+k}{k-1} u^{m-i+1} \hskip 20mm \text{by definition of } R^a_{k-1}(u),\\
    &= - \binom{i}{k} \binom{m-i+k}{k-1} u^{m+2}.
  \end{align*}
  In particular $c_+$ is zero when $k=i+1$. The same holds for $c_-$ in this case, since all sums  $R^a_*$ involved in its expression are empty.

  So let us finally consider the expression of $c_-$ for $k \in \llbracket 1, i\rrbracket$. We now use the second part of  Lemma~\ref{nabla_on_R} and obtain
  \begin{align*}
    c_- &=  u^{m+i+2-k}\left[ \binom{m+k-i-1}{k-1} R^{i-k}_k(1/u)-
    \binom{m+k-i}{k-1} R^{i-k-1}_k(1/u)
          + \binom{m+k-i-1}{k-2} R^{i-k}_{k}(1/u)\right]\\
        &= u^{m+i+2-k} \binom{m+k-i}{k-1} \left[ R^{i-k}_k(1/u)-R^{i-k-1}_k(1/u)\right]\\
        &= u^{m+i+2-k} \binom{m+k-i}{k-1} \binom{i}{k} u^{-(i-k)}\\
    &= \binom{m+k-i}{k-1} \binom{i}{k}u^{m+2} .
  \end{align*}
  Comparing with the expression of $c_+$ shows that  $\s H_i -  (u+z \nablam) \s H_{i-1}$ is zero for $1\le i \le m$.
  \medskip

  Let us finally prove that $\s H_{m+1}=  (u+z \nablam) \s H_{m}$. The  case $i=m$ of~\eqref{H-i} gives
  \[
    \s  H_m = u^{m+1} (1-z)^{m+1}  +\sum_{k=1}^{m} (-z)^k 
    u^{2m+2-k}R^{m-k}_k(1/u).
    \]
   We note that  $\nablam(u^{m+1})=0$ and use again the second part of  Lemma~\ref{nabla_on_R}. This gives
    \begin{align*}
\allowdisplaybreaks   (u+z \nablam) \s H_{m} &=     u^{m+2}(1-z)^{m+1} + \sum_{k=1}^{m} (-z)^k 
  \left[ u^{2m+3-k}R^{m-k}_k(1/u) +z
                           u^{2m+2-k}R^{m-k}_{k+1}(1/u)\right]\\
                         &= u^{m+2}(1-z)^{m+1} + \sum_{k=1}^{m} (-z)^k  u^{2m+3-k}R^{m-k}_k(1/u)
                             - \sum_{k=2}^{m+1} (-z)^{k}  u^{2m+3-k}R^{m-k+1}_{k}(1/u)\\
                         &= u^{m+2}(1-z)^{m+1} + (-z)u^{2m+2} R^{m-1}_1(1/u)\\
      & \hskip 50mm + \sum_{k=2}^{m+1} (-z)^ku^{2m+3-k}\left[ R^{m-k}_k(1/u)-R^{m-k+1}_{k}(1/u)\right].
    \end{align*}
    The rest of the calculation is straightforward: using the definition of $ R^{m-1}_1(1/u)$ and
    \[
      R^{m-k}_k(1/u)-R^{m-k+1}_{k}(1/u)=- \binom{m+1}{k} u^{-(m-k+1)},
    \]
    one finally recovers the expression~\eqref{H-m+1} of $\s H_{m+1}$. This concludes the proof of Proposition~\ref{prop:Hi-eq} and Theorem~\ref{thm:series}.
 \end{proof}

\end{document}